\documentclass[11pt,letterpaper]{amsart}
\usepackage{fancyhdr}
\usepackage{bm}
\usepackage{hyperref}
\usepackage{graphicx} 
\usepackage{nicematrix}
\usepackage{amsthm,amssymb,amsmath}
\usepackage[T1]{fontenc}
\usepackage[utf8]{inputenc}
\usepackage{hyperref}
\usepackage{lipsum}
\usepackage{mathrsfs}
\usepackage{enumitem}
\usepackage{stmaryrd}
\usepackage{setspace}
\usepackage{yfonts}
\usepackage{float}
\usepackage{mathtools}
\usepackage{parskip}
\usepackage[all,cmtip]{xy}
\usepackage{tikz-cd}
\tikzcdset{row sep/normal=50pt, column sep/normal=50pt}

\newtheorem{lemma}{Lemma}[section]
\newtheorem{remark}[lemma]{Remark}
\newtheorem{theorem}[lemma]{Theorem}
\newtheorem{theorem*}{Theorem}

\newtheorem{example*}[lemma]{Example}

\newtheorem{corollary}[lemma]{Corollary}

\newtheorem{manualtheoreminner}{Theorem}
\newenvironment{manualtheorem}[1]{%
  \renewcommand\themanualtheoreminner{#1}%
  \manualtheoreminner
}{\endmanualtheoreminner}
\newtheorem{manualquestioninner}{Question}
\newenvironment{manualquestion}[1]{%
  \renewcommand\themanualquestioninner{#1}%
  \manualquestioninner
}{\endmanualquestioninner}

\usepackage{blindtext}

\usepackage[backend=biber, style=numeric, sorting=nyt]{biblatex}
\title{Tangent bundle of punctual Hilbert scheme of a surface, and a conjecture of Dey-Mukherjee-Pahari}
\author{\small{Supravat Sarkar}}
\date{}
\begin{document}

\begin{abstract}
We describe the indecomposable components of the tangent bundle of the punctual Hilbert scheme of a smooth projective surface. As an application, we prove a recent conjecture about classification of products of punctual Hilbert schemes of smooth projective surfaces. We also determine when two products of symmetric powers of a smooth variety can be isomorphic. As a key step in our proof, we give a new characterization of abelian varieties, which states that in dimension $\geq 2$, a smooth complex projective variety whose tangent bundle is trivial upto a line bundle twist is an abelian variety.
\end{abstract}
\maketitle
\begin{center}
\textbf{Keywords}: Punctual Hilbert scheme, tangent bundle, indecomposable vector bundle
\end{center}
\begin{center}
\textbf{MSC Number: 14F06, 14C05} 
\end{center}

\section{Introduction}

We work throughout over the field $\mathbb{C}$ of complex numbers. Algebraic varieties will always be assumed to be integral. For a smooth projective surface $X$ and a positive integer $n$, the punctual Hilbert scheme $X^{[n]}$ parametrizing length $n$ subschemes of $X$ is a widely studied object. \cite{fogarty1968algebraic} showed $X^{[n]}$ is a smooth projective variety of dimension $2n$. For $X$ a $K3$ surface, $X^{[n]}$ becomes a hyperkähler manifold. For $X$ an abelian surface, one gets a hyperkähler manifold $Kum_n(X)$ closely related to $X^{[n]}$.

Several properties of $X^{[n]}$ are studied throughout the literature. Automorphisms of $X^{[n]}$ are studied by several authors, for example \cite{belmans2020automorphisms},\cite{Og},\cite{Ha},\cite{Wa}, \cite{bansal2025automorphisms}. Another direction of exploring $X^{[n]}$ is to study vector bundles on $X^{[n]}$. This has been pursued for the so-called tautological bundles in \cite{wandel2013stability}, \cite{oprea2022big},\cite{scala2015higher}, \cite{stapleton2016geometry} among others.

For any smooth projective variety $Y$, a very natural vector bundle on $Y$ is the tangent bundle, which we will always denote by $T_Y$. Study of $T_Y$ often gives important geometric information about $Y$, as can be seen, for example, from the famous paper \cite{mori1979projective} of Mori. Tangent bundles of varieties has been studied from several viewpoints, for example in \cite{mehta1987varieties}, \cite{liu2023moment},\cite{tian1992stability} and \cite{conde2004manifolds}.

One of the goals of this article is to study tangent bundle of $X^{[n]}$ for a smooth projective surface $X$. By \cite[Theorem 3]{atiyah1956krull}, every vector bundle on a projective variety can be written as a direct sum of indecomposable vector bundles, which are uniquely determined upto isomorphism. Here a vector bundle is called \textit{indecomposable} if it is not a direct sum of two nonzero vector bundles. Our result describes these indecomposable components of $T_{X^{[n]}}.$ The precise result is Theorem \ref{main}, we state here the following weaker version which is easier to state.
\begin{manualtheorem}{A}\label{A}
    Let $X$ be a smooth projective surface, $n \geq 2$ an integer. Then $T_{X^{[n]}}$ is indecomposable, except possibly in the following cases:
    \begin{enumerate}
        \item $X$ is an abelian surface,
        \item $X$ is minimal and has an isotrivial elliptic fibration,
        \item $X$ is a $\mathbb{P}^1$-bundle over an elliptic curve.
    \end{enumerate}

\end{manualtheorem}
The idea of the proof of Theorem \ref{A} is as follows. Let $S_n$ be the symmetric group on $n$ letters, acting on $X^n$ by permuting the factors. Given a direct sum decomposition of $T_{X^{[n]}}$, we can obtain a direct sum decomposition of $T_{X^n}$ by $S_n$-invariant subbundles, as we explain in the beginning of \S 5. In \S 4, we show such direct sum decomposition of $T_{X^n}$ has very limited possiblities. Finally, in \S 5, we show most of those possibilities do not arise from a direct sum decomposition of $T_{X^{[n]}}$. Along the way, we need to characterize smooth projective surfaces with some particular splitting of tangent bundle. This we do in Theorem \ref{L+L} and Theorem \ref{O+K} in \S 3, which are interesting in their own right. Though we need Theorem \ref{L+L} only for surfaces, we prove it in any dimension. It gives a new characterization of abelian varieties, generalizing the well-known result that the only smooth complex projective varieties with trivial tangent bundle are abelian varieties.

Next, we turn to the following question:
\begin{manualquestion}{1}\label{q}
Suppose $X_1,\cdots ,X_r, Y_1,\cdots ,Y_s$ are varieties such that $$\prod_i X_i\cong \prod _j Y_j.$$ Can we conclude that $r=s$ and upto renumbering $X_i\cong Y_i$ for all $i$?
\end{manualquestion}

Of course without any assumption on the varieties the answer to the question is negative, as the varieties themselves can be products. Even when the varieties are smooth projective and indecomposable, that is, not a nontrivial product, the answer can be negative, as one can see by choosing the varieties to be suitable abelian varieties as in \cite[Lemma 3]{poonen2002grothendieck}. In general when there is no obvious counterexample, this question is very subtle, especially when the varieties are not proper. For example, the famous Zariski cancellation problem is an instance of this question, see \cite{gupta2015survey}. In the case the varieties are projective, indecomposable with irregularity $0$, the question has an affirmative answer, essentially by the same proof as of \cite[Theorem 2.7, Corollary 2.8]{Ha}. When the varieties are smooth projective indecomposable with ample canonical class, the question has an affirmative answer due to \cite[Theorem 4.2]{BHPS}.

As an application of Theorem \ref{main}, we give an affirmative answer to Question \ref{q} when the varieties are punctual Hilbert schemes of a fixed smooth projective surface. This proves a recent conjecture in \cite{dey2026classification} about classification of products of Hilbert schemes, and extends the corresponding result for Hilbert scheme of points on a smooth projective curve as proved in \cite{mukherjee2025diagonal}.
\begin{manualtheorem}{B}\label{conjecture}

Let $X$ be a smooth projective surface, and $a_1, \dots, a_r$, $b_1, \dots, b_s$ positive integers.
Suppose
\[
\prod_i X^{[a_i]} \cong \prod_j X^{[b_j]}.
\]
Then $r=s$ and
$ \{a_1, \dots, a_r\} = \{b_1, \dots, b_s\}
$
as multisets.
\end{manualtheorem}

Next, we give an affirmative answer to question \ref{q} for symmetric powers of a (not necessarily proper) smooth variety $X$ of any dimension, denoted by $X^{(a)}$.
\begin{manualtheorem}{C}\label{C}
   Let $X$ be a smooth variety of dimension $\geq 2$, and suppose
\[
\prod_i X^{(a_i)} \cong \prod_j X^{(b_j)}.
\]
Then $r = s$ and $\{a_1,\dots,a_r\} = \{b_1,\dots,b_s\}$
as multisets.
\end{manualtheorem}

We prove Theorem \ref{main} in \S 6, and Theorem \ref{conjecture} and \ref{C} in \S 7.

\section{Notations and conventions}
\begin{itemize}
    \item For complex manifolds $X_1,\dots,X_n$, with $Y=\prod_iX_i$ and projections $\pi_i:Y\to X_i$, we will always identify $\pi_i^*T_{X_i}$ as a direct summand subbundle of $T_Y$ via the natural isomorphism $T_Y\cong\oplus_i\pi_i^*T_{X_i}.$
    \item For a group $G$ acting on a complex manifold $Y$, we say the action is a \textit{covering space action} if $G$ is the group of deck transformations of a holomorpic covering space $Y\to X.$
    \item For a variety $X$, we denote the singular locus of $X$ by $\mathrm{Sing }(X).$
    \item For a positive integer $n$, $\mathbb{D}^n$ is the polydisc $$\{(z_1,\cdots,z_n)\in \mathbb{C}^n\big| |z_i|<1 \text{ for all } i\}.$$ We abbreviate $\mathbb{D}^1$ as $\mathbb{D}.$
    \item For a smooth projective variety $X$, the canonical bundle of $X$ will be denoted by $K_X.$
    \item Let $\mathcal{C}$ be the class of smooth projective surfaces $X$ which are of the form $(D\,\times\, Y)/G$, where $D$ is an elliptic curve, $Y$ a smooth curve of general type, and $G$ is a finite subgroup of $\text{Aut}^0(D)$ acting diagonally on $D\times Y.$ This is what was called class $E$ in {\cite[Theorem D]{fong2024connected}}. In this case, $X$ is minimal of Kodaira dimension $1$, and the Iitaka fibration of $X$ is an isotrivial elliptic fibration. 
    \item Call a smooth projective surface $X$ to be of class $\tilde{\mathcal{C}}$ if $X$ is either of class $\mathcal{C}$, or a bielliptic surface, or a $\mathbb{P}^1$-bundle over an elliptic curve with $T_X$ split, that is, direct sum of two line bundles. These $\mathbb{P}^1$-bundles over an elliptic curve $E$ are completely classified in \cite{ganong1985tangent}: these are precisely $\mathbb{P}_E(\mathcal{E})$ for an indecomposable rank $2$ vector bundle $\mathcal{E}$ on $E$ (by \cite{atiyah1957vector} there are only $2$ such ruled surfaces), and $\mathbb{P}_E(\mathcal{O}_E\oplus L)$ for a degree $0$ line bundle $L$ on $E.$
\end{itemize}
\section{Splitting of tangent bundle}
The goal of this section is to prove Theorems \ref{L+L} and \ref{O+K}.
\begin{theorem}\label{L+L}
    Let $X$ be a smooth projective variety of dimension $n\geq 2$ whose tangent bundle is trivial upto a line bundle twist. Then $X$ is an abelian variety.
\end{theorem}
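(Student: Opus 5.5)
The plan is to write $T_X\cong L^{\oplus n}$ for a line bundle $L$, so that $\Omega_X\cong (L^{-1})^{\oplus n}$ and $-K_X=nL$. First I show that $L$ is numerically trivial. Then I show that $X$ is an étale quotient of an abelian variety. Finally, the "scalar holonomy" forced by $T_X\cong L^{\oplus n}$ rules out any nontrivial quotient.

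\emph{Step 1: $K_X$ is nef.} Suppose $K_X$ is not nef. By Mori's bend-and-break, $X$ is then uniruled. Choose a minimal rational curve $f:\mathbb{P}^1\to X$ through a general point, i.e.\ a free curve of minimal anticanonical degree. Its standard splitting is $f^*T_X\cong \mathcal{O}(2)\oplus\mathcal{O}(1)^{p}\oplus\mathcal{O}^{q}$ with $1+p+q=n$. On the other hand $f^*T_X\cong \mathcal{O}(a)^{\oplus n}$ with $a=\deg f^*L$. Comparing the two splittings forces $a=2$ and $n=1$, which contradicts $n\geq 2$. Hence $X$ is not uniruled and $K_X=-nL$ is nef, so $-L$ is nef.

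\emph{Step 2: $L\equiv 0$; this is the main obstacle.} The line bundle $L^{-1}=K_X^{1/n}$ is a rank-one subsheaf of $\Omega_X$ and is a positive rational multiple of $K_X$.
\begin{itemize}
\item By the Bogomolov--Castelnuovo--de Franchis inequality, $\kappa(L^{-1})\leq 1$. I would first try the stronger numerical-dimension version for rank-one subsheaves of $\Omega_X$ (Campana--P\u{a}un type results) to get $\nu(K_X)\leq 1$, i.e.\ $L^2\cdot H^{n-2}=0$ for an ample $H$.
\item To exclude $\nu(K_X)=1$, I would use $c_2(X)=\binom{n}{2}L^2$ together with Miyaoka's pseudo-effectivity of $c_2$ for non-uniruled $X$ with $K_X$ nef. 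If that is not enough, I would instead use the integrable subfoliation $L\subset T_X$, whose Frobenius obstruction lands in $\mathrm{Hom}(L^{2},L^{\oplus(n-1)})=H^0(L^{-1})^{\oplus (n-1)}$. The aim is to show that a leaf-wise nonzero $K_X$-degree contradicts semipositivity of $\Omega_X$ restricted to general complete intersection curves (Miyaoka).
\end{itemize}
I expect this step to need the most care; the remaining steps are comparatively formal.

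\emph{Step 3: reduction to an abelian variety.} Once $L\equiv 0$, $L$ lies in $\mathrm{Pic}^\tau(X)$ and so carries a flat unitary metric. Then $T_X$ is a flat unitary (hermitian flat) bundle. By Yau/Kobayashi (equivalently Beauville--Bogomolov with $c_1=c_2=0$), $X\cong A/G$ with $A$ an abelian variety and $G$ a finite group acting freely. The holonomy representation $G\to U(n)$ is the linear part of the $G$-action. Since $T_X\cong L^{\oplus n}$, the pullback $T_A=\mathcal{O}_A^{\oplus n}$ has $G$ acting through scalars $\zeta(g)$.

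\emph{Step 4: $G$ acts by translations.} Take $g\in G$ with $\zeta(g)\neq 1$, so $g(x)=\zeta x+b$. Then $1-\zeta$ is an isogeny of $A$, so it is surjective, and $g$ has a fixed point. This contradicts freeness, so every $\zeta(g)=1$. Thus $G$ acts by translations and $X=A/G$ is again an abelian variety.
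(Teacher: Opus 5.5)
\textbf{There is a genuine gap at your Step 2.} This is the heart of the theorem, and you give a menu of tools rather than an argument. The tools you name cannot settle the case $\nu(K_X)=1$.

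In that case $c_1^2\cdot H^{n-2}=n^2L^2\cdot H^{n-2}=0$ and $c_2\cdot H^{n-2}=\binom{n}{2}L^2\cdot H^{n-2}=0$. So Miyaoka's pseudo-effectivity of $c_2$ holds with no contradiction. Generic semipositivity only says that $\Omega_X|_C\cong (L^{-1}|_C)^{\oplus n}$ is nef, which you already know since $L^{-1}$ is nef. The Frobenius obstruction is vacuous for the rank-one foliation $L$, as $\wedge^2 L=0$. For $L\oplus L$ it is just a section of $(L^{-1})^{\oplus(n-2)}$, which yields nothing.

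Indeed, $C\times E$ with $g(C)\ge 2$ and $E$ elliptic is non-uniruled, has $K$ nef with $\nu(K)=1$, and has $c_1^2=c_2=0$. It satisfies every numerical constraint you invoke. So you must use the isomorphism $T_X\cong L^{\oplus n}$ itself, not just its Chern numbers.

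\textbf{The missing idea, which the paper uses, is Beauville's lemma on Atiyah classes of summands.} Write $T_X=L_1\oplus\cdots\oplus L_n$ and $F_i=\bigoplus_{j\neq i}L_j$. For $Y$ a section of $F_i$ and $s$ a section of $L_i$, the formula $\nabla_Y s=p_{L_i}[Y,s]$ is a holomorphic partial connection on $L_i$ along $F_i$. Hence the Atiyah class of $L_i$, a nonzero multiple of $c_1(L_i)$, lies in the summand $H^1(X,L_i^{-1})$ of $H^1(X,\Omega_X)=\bigoplus_j H^1(X,L_j^{-1})$. All the $L_i$ are isomorphic, so these classes coincide. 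For $n\ge 2$ they lie in independent summands, so $c_1(L)=0$ in $H^2(X,\mathbb{C})$ and $c(T_X)=1$.

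This also makes your Step 1 unnecessary. As written, Step 1 misstates that $K_X$ non-nef forces uniruledness, which is false for blow-ups of non-uniruled varieties. It is repairable here: any rational curve $f$ has $0\neq df:\mathcal{O}(2)\to\mathcal{O}(a)^{\oplus n}$, so $a\ge 2$ and $f$ is free. Once $c(T_X)=1$, your Steps 3 and 4 go through. Step 3 is the paper's étale-quotient step (you cite Yau where the paper cites Catanese's Theorem 4.1), and Step 4 is the paper's scalar-linear-part and isogeny fixed-point argument.
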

\begin{proof}
For a vector bundle $E$ on a smooth projective variety $Y$, let
\[
c_i(E)\in H^i(Y,\Omega_Y^i)\subset H^{2i}(Y,\mathbb{C})
\]
be the $i$'th Chern class of $E$. Let
\[
c(E)=\sum_{i\ge0}c_i(E)\in H^*(Y,\mathbb{C})
\]
be the total Chern class.

Let $T_X=\bigoplus_{i=1}^{n}L_i,$
where the $L_i$'s are pairwise isomorphic line subbundles of $T_X$. So, $H^1(X,\Omega_X)
=
\bigoplus_{i=1}^{n}
H^1(X,L_i^{-1}),$
and by \cite[Lemma 3.1]{beauville2000complex}, $c_1(L_i)\in H^1(X,\Omega_X)$
lies in the direct summand $H^1(X,L_i^{-1})$. Here we use the standard fact that the Atiyah class of the line bundle $L_i$ is a nonzero scalar multiple of $c_1(L_i)$; see \cite{atiyah1957complex}. As the $L_i$'s are all isomorphic, all $c_1(L_i)$'s are equal. Since $n\ge2$, we obtain $c_1(L_i)=0$
for all $i$. Therefore,
\[
c(T_X)
=
\prod_{i}(1+c_1(L_i))
=
1,
\]
that is, $c_j(T_X)=0$
for all $j>0$.

By \cite[Theorem 4.1]{catanese2022manifolds},
$X=A/G$, where $A$ is an abelian variety and $G$ is a finite group acting freely on $A$ without translations.

Let $\pi:A\to X$
be the quotient map. The line bundles $\pi^*M_i$ are all trivial by \cite{atiyah1956krull}, as
\[
\bigoplus_i \pi^*M_i
\cong
\pi^*T_A
\cong
T_X
\cong
\mathcal{O}_A^{\oplus n}.
\]
Also, the $\pi^*M_i$'s are all $G$-equivariantly isomorphic, as the $L_i$'s are all isomorphic. Hence there is a character $\chi:G\longrightarrow\mathbb{C}^*$
such that each $\pi^*M_i$ is $G$-equivariantly isomorphic to $\mathcal{O}_A$ with linearization given by $\chi$. Consequently, $T_A$
is $G$-equivariantly isomorphic to $\mathcal{O}_A^{\oplus n}$ with $G$-linearization given by the map
\[
G
\xrightarrow{\ \eta\ }
GL(n,\mathbb{C}),
\qquad
g\longmapsto \chi(g)I.
\]

In other words, after identifying $T_0A=\mathbb{C}^n$, under the natural trivialization of $T_A$ by translations, the $G$-linearization of $T_A$ is exactly the one given by $\eta$.

Now we show that $G=1$. Suppose not. Let $1\neq g\in G$.
There is $a\in A$ and a group automorphism $\varphi$ of $A$ such that
\[
g\cdot x=\varphi(x)+a
\]
for all $x\in A$. We have $(d\varphi)_0=\eta(g)$ in $GL(n,\mathbb{C})$.
As $G$ acts without translations, we have $\varphi\neq\mathrm{id}$, so $(d\varphi)_0\neq I.$
Hence $\chi(g)\neq1.$
Therefore,
\[
d(1-\varphi)_0=(1-\chi(g))I
\]
is invertible, where $1$ denotes the identity automorphism of $A$. Thus the group homomorphism $1-\varphi:A\to A$
is an isogeny. Hence there is $x_0\in A$ such that $(1-\varphi)(x_0)=a.$
Therefore, $g\cdot x_0=x_0,$
a contradiction as $G$ acts freely on $A$. So $G=1$, hence $X=A$
is an abelian variety.
\end{proof}

\begin{theorem}\label{O+K}
    Let $X$ be a smooth projective surface whose tangent bundle has a trivial direct summand.
Then $X$ is an abelian surface or of class $\Tilde{\mathcal{C}}$.
\end{theorem}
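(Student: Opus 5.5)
Write $T_X\cong\mathcal{O}_X\oplus K_X^{-1}$; since $\det T_X=K_X^{-1}$, any splitting with a trivial summand must have this form. The trivial summand gives a nowhere vanishing vector field $v$ on $X$. The plan is to classify surfaces with a nowhere vanishing vector field and then use the extra splitting to cut the list down.

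\textbf{Step 1: the numerical invariants.} A nowhere vanishing vector field forces $c_2(X)=e(X)=0$. Moreover $c_2(T_X)=c_1(\mathcal{O}_X)c_1(K_X^{-1})=0$ and $c_1^2=K_X^2$. By Noether's formula, $\chi(\mathcal{O}_X)=K_X^2/12$.

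\textbf{Step 2: use the Atiyah class.} Exactly as in the proof of Theorem \ref{L+L}, use \cite[Lemma 3.1]{beauville2000complex}. The decomposition gives
\[
H^1(X,\Omega_X)=H^1(X,\mathcal{O}_X)\oplus H^1(X,K_X),
\]
and $c_1(K_X^{-1})$ lies in the summand $H^1(X,K_X)$. This gives strong restrictions, for example on how $K_X$ pairs with the classes coming from the $\mathcal{O}_X$ factor.

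\textbf{Step 3: reduce to the minimal model.} A nowhere vanishing vector field lifts through no blow-up. More precisely, a vector field on a blow-up descends to one vanishing at the blown-up point. Conversely, a vector field that does not vanish at $p$ lifts to the blow-up with a zero on the exceptional curve. Also, $c_2=0$ is impossible for a non-minimal surface whose minimal model has $c_2\ge 0$, except for ruled surfaces. So the argument divides into the following cases.

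\textbf{Step 4: case analysis on the Kodaira dimension.} Surfaces with a nowhere vanishing holomorphic vector field are classically known. For $\kappa\ge0$ they are minimal with $c_2=0$ and $K_X^2=0$, hence $\chi(\mathcal{O}_X)=0$. These are abelian surfaces, bielliptic surfaces, and properly elliptic surfaces whose vector field generates an elliptic-curve action. This last case gives the isotrivial quotients $(D\times Y)/G$ with $G$ acting by translations on $D$, i.e. class $\mathcal{C}$. Surfaces of general type have no vector fields. For $\kappa=-\infty$, $e(X)=0$ forces a ruled surface over an elliptic curve. Here the Albanese map is a $\mathbb{P}^1$-bundle $X\to E$, because $e=0$ rules out singular fibers. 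The vector field $v$ maps to a nonzero vector field on $E$, so it is transverse to the fibers. Hence $\mathcal{O}_X v$ splits the relative tangent sequence $0\to T_{X/E}\to T_X\to \pi^*T_E=\mathcal{O}\to0$, giving $T_X\cong T_{X/E}\oplus\mathcal{O}_X$, i.e. $T_X$ split. If instead $v$ were vertical, it would vanish somewhere on each $\mathbb{P}^1$ fiber. Rational surfaces have $e\ge3$ and are excluded. For $\kappa=1$, I would identify the class $\mathcal{C}$ explicitly. The flow of $v$ gives an action of the connected group $\mathrm{Aut}^0(X)$, which is an elliptic curve $D$ acting freely. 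Then $X\to X/D=Y/G$ realizes $X=(D\times Y)/G$ as in \cite{fong2024connected}.

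\textbf{Main obstacle.} The hardest step is the $\kappa=1$ case, together with making Step 3 rigorous. Showing that a nowhere vanishing vector field integrates to a free $D$-action with the stated quotient form is where I would lean on \cite[Theorem D]{fong2024connected}. The remaining ruled-over-elliptic case is matched against the classification in \cite{ganong1985tangent}.
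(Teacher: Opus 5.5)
Your overall architecture matches the paper's: split by Kodaira dimension, hand $\kappa(X)\ge 0$ to \cite{fong2024connected}, and show $X$ is a $\mathbb{P}^1$-bundle over an elliptic curve when $\kappa(X)=-\infty$. One step, however, is not established: minimality of $X$ when $\kappa(X)=-\infty$. In Step 3 you assert that a vector field not vanishing at $p$ \emph{lifts to the blow-up with a zero on the exceptional curve}. As written this is false, since such a field does not lift holomorphically at all: in the chart $y=xt$, $\partial_x$ becomes $\partial_x-(t/x)\partial_t$.

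Minimality is genuinely needed here. Your $c_2$ count gives it except for ruled surfaces over curves of genus $\ge 2$, and that is exactly where it fails. The claim in Step 4 that $e(X)=0$ forces a ruled surface over an elliptic curve is false in general. For example, a $\mathbb{P}^1$-bundle over a genus-$2$ curve blown up at $4$ points has $\kappa=-\infty$ and $e=-4+4=0$.

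The missing argument is short. Let $C\subset X$ be a smooth rational curve with $C^2<0$. The image of $v|_C$ in $H^0(C,N_{C/X})=H^0(\mathbb{P}^1,\mathcal{O}(C^2))=0$ vanishes. So $v|_C$ is a section of $T_C\cong\mathcal{O}_{\mathbb{P}^1}(2)$, and therefore has a zero. Hence a nowhere vanishing vector field rules out $(-1)$-curves.

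Alternatively, over a base $B$ of genus $\ge 2$ we have $H^0(B,T_B)=0$. This forces $v$ to be vertical, so it vanishes on every smooth fibre, exactly as in your own remark for the elliptic base. With either argument inserted, your proof is complete.

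Compared with the paper, the tools differ:
\begin{itemize}
\item The paper excludes general type by \cite{popa2014kodaira}, applied to the nowhere vanishing $1$-form given by the $\mathcal{O}_X$ summand of $\Omega_X\cong\mathcal{O}_X\oplus K_X$.
\item It applies \cite{fong2024connected} directly to $h^0(T_X)>0$, with no Chern-number bookkeeping.
\item For $\kappa=-\infty$ it gets minimality from \cite[Theorem C]{beauville2000complex} and the elliptic base from $q=h^0(\Omega_X)=h^0(\mathcal{O}_X)+h^0(K_X)=1$.
\end{itemize}
Your route instead works with the vector field: general type surfaces have no vector fields, and $e(X)=0$ by Poincar\'e--Hopf. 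This is more elementary once the $(-1)$-curve argument above is supplied.

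Some minor points:
\begin{itemize}
\item Step 2 is never used.
\item The transversality argument producing $T_X\cong T_{X/E}\oplus\mathcal{O}_X$ is redundant. $T_X\cong\mathcal{O}_X\oplus K_X^{-1}$ is already split by hypothesis, which is all class $\tilde{\mathcal{C}}$ requires.
\item For $\kappa=1$, the elliptic curve $\mathrm{Aut}^0(X)$ acts with finite but not necessarily trivial stabilizers; these are nontrivial over points of $Y$ fixed by elements of $G$. This is harmless, since there you defer to \cite{fong2024connected}, as the paper does.
\end{itemize}
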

\begin{proof}
    We have
\[
T_X \cong \mathcal{O}_X \oplus L
\] for a line bundle $L$ on $X$.
Taking determinants, we get
\[
L \cong K_X^{-1}.
\]
So,
\[
\Omega_X \cong \mathcal{O}_X \oplus K_X.
\]

As $\Omega_X$ has a nowhere vanishing section, by \cite{popa2014kodaira} we have $\kappa(X) \leq 1$.

Suppose $\kappa(X) \neq -\infty$. Note that
\[
\dim \operatorname{Aut}^0(X) = h^0(X,T_X) > 0,
\] where $\operatorname{Aut}^0(X)$ is the identity component of the automorphism group scheme of $X.$
By \cite{fong2024connected}, 
$X$ is of class $\mathcal{C}$ or an abelian surface or a bielliptic surface.

Suppose $\kappa(X) = -\infty$. So, $h^0(X,K_X) = 0.$
Hence
\[
h^0(X,\Omega_X) = h^0(X,\mathcal{O}_X) = 1.
\] By \cite[Theorem C]{beauville2000complex}, $X$ has no $(-1)$-curve. So $X$ is a $\mathbb{P}^1$-bundle over an elliptic curve. As $T_X$ splits, $X$ is in class $\Tilde{\mathcal{C}}$.

\end{proof}
\section{$S_n$-invariant subbundles of $T_{X^n}$ }
The goal of this section is to prove Theorems \ref{Sn invariant1} and Theorem \ref{Sn invariant}. For a smooth projective surface $X$ and an integer $n\geq 2$, the action of $S_n$ on $X^n$ induces an action of $S_n$ on $T_{X^n}$, that is, an $S_n$-linearization of $T_{X^n}$. A subbundle of $T_{X^n}$ is called $S_n$-invariant if it is invariant under this action.
\begin{theorem}\label{Sn invariant1}
    Let $X$ be a smooth projective surface with $T_X$ indecomposable, and $n \geq 2$ an integer. Then $T_{X^n}$ can not be written as a direct sum of two nonzero $S_n$-invariant subbundles.
\end{theorem}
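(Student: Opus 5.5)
The plan is to work in the endomorphism ring of $T_{X^n}$ and reduce the question to idempotents in a product of copies of $\mathbb{C}$.

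\textbf{Step 1: the idempotent and its matrix.} Suppose $T_{X^n}=E\oplus F$ with $E,F$ both $S_n$-invariant. Let $e\in A:=\operatorname{End}(T_{X^n})$ be the projection onto $E$ along $F$. Since both summands are $S_n$-invariant, $e$ is an $S_n$-equivariant idempotent. Using $T_{X^n}=\bigoplus_i \pi_i^*T_X$, write $e=(e_{ij})$ with $e_{ij}\in \operatorname{Hom}(\pi_j^*T_X,\pi_i^*T_X)$.

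\textbf{Step 2: diagonal and off-diagonal Hom spaces.} By Künneth and $H^0(X,\mathcal{O}_X)=\mathbb{C}$ we get:
\begin{itemize}
\item $\operatorname{End}(\pi_i^*T_X)=\operatorname{End}(T_X)$. Since $T_X$ is indecomposable, this is a finite-dimensional local $\mathbb{C}$-algebra with residue field $\mathbb{C}$. In particular each $\pi_i^*T_X$ is indecomposable.
\item For $i\neq j$, $\operatorname{Hom}(\pi_j^*T_X,\pi_i^*T_X)=H^0(X,\Omega_X)\otimes H^0(X,T_X)$. Concretely, such a map is $\sum_k \alpha_k(x_j)\otimes v_k(x_i)$ with $\alpha_k$ global $1$-forms and $v_k$ global vector fields.
\end{itemize}

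\textbf{Step 3 (main obstacle): off-diagonal composites are not units.} I claim that for $i\neq j$, $\varphi\in\operatorname{Hom}(\pi_j^*T_X,\pi_i^*T_X)$ and $\psi\in\operatorname{Hom}(\pi_i^*T_X,\pi_j^*T_X)$, the composite $\psi\varphi\in\operatorname{End}(T_X)$ lies in the maximal ideal. Suppose instead it is an automorphism.
\begin{itemize}
\item Freeze the $i$-th coordinate at a point $p$.
\item $\varphi$ then factors as $T_X\to\mathcal{O}_X^{m}\to T_{X,p}\otimes\mathcal{O}_X$, through the evaluations of the $\alpha_k$ and then the fixed vectors $v_k(p)$.
\item Composing with $\psi$, which is likewise built from global forms and fields, gives an automorphism of $T_X$ factoring through a trivial bundle.
\item Hence $T_X$ is a direct summand of $\mathcal{O}_X^{N}$. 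By Krull–Schmidt \cite[Theorem 3]{atiyah1956krull}, $T_X\cong\mathcal{O}_X^2$, contradicting indecomposability.
\end{itemize}
The bookkeeping of which coordinate is frozen needs care, but the principle is that off-diagonal maps factor through trivial bundles.

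\textbf{Step 4: the radical of $A$.} From Step 3, the standard argument for endomorphism rings of sums of indecomposables with local endomorphism rings applies. The set $J\subset A$ of matrices with off-diagonal entries arbitrary and diagonal entries in the maximal ideal of $\operatorname{End}(T_X)$ is a two-sided ideal: a product of two off-diagonal entries landing on the diagonal is non-unit by Step 3. Moreover $J$ is nil, hence equals the Jacobson radical. Therefore $A/J\cong\mathbb{C}^n$, with the $i$-th coordinate given by the residue of the diagonal entry $e_{ii}$.

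\textbf{Step 5: conclusion.} $S_n$ acts on $A$ by conjugation, preserving $J$, and induces the permutation action on $\mathbb{C}^n$. The image $\bar e$ of $e$ is an $S_n$-invariant idempotent of $\mathbb{C}^n$, so $\bar e=0$ or $\bar e=1$. Idempotents lift uniquely modulo a nil ideal up to conjugacy; concretely, $e\equiv 0\pmod J$ makes $e=e^N\in J^N=0$, and similarly $1-e$ in the other case. Hence $e=0$ or $e=1$, i.e.\ $E=0$ or $F=0$, as required.
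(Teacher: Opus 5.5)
Your argument is correct and takes a genuinely different route from the paper.

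\textbf{The paper's route.} The paper works with isomorphism classes. Each $\pi_i^*T_X$ is indecomposable, since its restriction to a section of $\pi_i$ is $T_X$. They are pairwise non-isomorphic because $T_X$ is nontrivial. So Atiyah's Krull--Schmidt theorem gives $\mathcal{E}_1\cong\bigoplus_{i\in I}\pi_i^*T_X$ for some $I\subseteq[n]$. $S_n$-invariance then yields $\bigoplus_{i\in I}\pi_i^*T_X\cong\bigoplus_{i\in\sigma(I)}\pi_i^*T_X$, and uniqueness in Krull--Schmidt forces $\sigma(I)=I$ for all $\sigma$, so $I=\emptyset$ or $[n]$.

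\textbf{Your route.} You work with the equivariant projection itself. Your Step 3 is precisely the non-isomorphism $\pi_i^*T_X\not\cong\pi_j^*T_X$ recast in ring-theoretic form. Steps 4--5 re-derive by hand the piece of Krull--Schmidt theory the paper cites, namely $A/\operatorname{rad}A\cong\mathbb{C}^n$ with $S_n$ permuting the coordinates. The paper's proof is shorter. Yours is self-contained, never replaces subbundles by isomorphism classes, and makes explicit that only transitivity of $S_n$ on the factors matters.

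\textbf{The one unproved claim.} You assert without proof that $J$ is nil, and Step 5 depends on this. It is true, but it needs an argument. A quick one goes as follows. The maximal ideal $\mathfrak{m}$ of the finite-dimensional local algebra $\operatorname{End}(T_X)$ is nilpotent. So for $x\in J$ every diagonal block is fibrewise nilpotent, and the (constant) fibrewise trace of $x$ vanishes. Applying this to $x^k\in J$ gives $\operatorname{tr}(x^k)=0$ for all $k\ge 1$. By Newton's identities all fibrewise eigenvalues of $x$ are $0$, hence $x^{2n}=0$.

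Alternatively, you can skip nilness altogether. Write $e_{ii}=\lambda_i\,\mathrm{id}+m_i$ with $m_i\in\mathfrak{m}$; then $\operatorname{rank}E=\operatorname{tr}(e)=2\sum_i\lambda_i$. Since $(\lambda_i)_i=\bar e$ is an $S_n$-invariant idempotent of $\mathbb{C}^n$, this gives $\operatorname{rank}E\in\{0,2n\}$.
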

\begin{proof}
Let $\pi_i:X^n\to X$ be the projections. Suppose we can write $T_{X^n} = \mathcal{E}_1 \oplus \mathcal{E}_2$, with $\mathcal{E}_1, \mathcal{E}_2$ nonzero $S_n$-invariant subbundles of $T_{X^n}$. Note that $\pi_i^* T_X$ is indecomposable, as its restriction to a section of $\pi_i$ is $T_X$, which is indecomposable. As $T_X$ is nontrivial, $\pi_i^* T_X$'s are pairwise non-isomorphic.

Since
\[
\bigoplus_i \pi_i^* T_X \cong \mathcal{E}_1 \oplus \mathcal{E}_2,
\]
by \cite[Theorem 3]{atiyah1956krull}, there exists $I \subset[n]$ such that we have
\[
\mathcal{E}_1= \bigoplus_{i\in I} \pi_i^* T_X,
\]
and 
\[\mathcal{E}_2 = \bigoplus_{i\not\in I} \pi_i^* T_X.
\] As $\mathcal{E}_1$ is $S_n$-invariant, we have 
\[
\bigoplus_{i \in I} \pi_i^* T_X \cong \bigoplus_{i \in \sigma(I)} \pi_i^* T_X
\quad \text{for all } \sigma \in S_n.
\]

As $\pi_i^* T_X$'s are pairwise non-isomorphic, by \cite[Theorem 3]{atiyah1956krull} we have $I = \sigma(I)$ for all $\sigma \in S_n$. Hence $I = \emptyset$ or $[n]$. This is a contradiction since $\mathcal{E}_1$ and $\mathcal{E}_2$ are nonzero.

\end{proof}
\begin{theorem}\label{Sn invariant}
    Let $X$ be a smooth projective surface that is not an abelian surface, and $n \geq 2$ an integer. Suppose there are line subbundles $L, M$ of $T_X$ with $T_X = L \oplus M.$ Suppose $\mathcal{E}_1, \mathcal{E}_2$ are $S_n$-invariant subbundles of $T_{X^n}$ with $0<\textrm{rank }\mathcal{E}_1\leq \textrm{rank }\mathcal{E}_2$ and
\[
T_{X^n} = \mathcal{E}_1 \oplus \mathcal{E}_2.
\]
Let $\pi_i:X^n\to X$ be the projections.
Then, after swapping $L, M$ if necessary, one of the following holds:

\begin{enumerate}
\item[(a)]
$\mathcal{E}_2 = \bigoplus_{i=1}^n \pi_i^* M.$
\item[(b)]
$L$ is trivial, and denoting
\[
\mathcal{F}_1 = \operatorname{im} \left( \mathcal{O}_{X^n} \xlongrightarrow{(\pi_i^*\xi)_i} \bigoplus_i \pi_i^* L \right),
\]
and
\[
\mathcal{F}_2 = \ker \left( \bigoplus_i \pi_i^* L \xlongrightarrow{(\pi_i^*\xi^{-1})_i} \mathcal{O}_{X^n} \right),
\] where  $\xi : \mathcal{O}_X \longrightarrow L$ is a trivialization, we have

\begin{itemize}
\item[ either (i)] $\mathcal{E}_1= \mathcal{F}_k$ for some $k = 1,2$, and $h^0(X, M) = 0$,
\item[or (ii)] $\mathcal{E}_2 = \mathcal{F}_k \oplus \left( \bigoplus_i \pi_i^* M \right)$, for $k = 1,2$, and $h^0(X, M) \neq 0$.
\end{itemize}
\end{enumerate}
\end{theorem}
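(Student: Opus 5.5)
The plan is to use Krull--Schmidt \cite[Theorem 3]{atiyah1956krull} to pin down the isomorphism types of $\mathcal{E}_1,\mathcal{E}_2$, and then use Hom-vanishing and $S_n$-representation theory to upgrade these isomorphisms to equalities of subbundles.

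First I would reduce the possible configurations of $L,M$. If $L\cong M$, then $T_X$ is trivial up to a line bundle twist, so $X$ is abelian by Theorem \ref{L+L}, which is excluded. If $L$ and $M$ are both trivial, then $T_X$ is trivial and again $X$ is abelian. So $L\not\cong M$, and after swapping at most one of them, say $L$, is trivial. We have
\[
T_{X^n}=\bigoplus_i \pi_i^*L\oplus\bigoplus_i\pi_i^*M ,
\]
a sum of line bundles. Restricting to slices $\{x_1\}\times\cdots\times X\times\cdots$ shows that $\pi_i^*A\cong\pi_j^*B$ with $i\neq j$ forces $A,B$ trivial. The homomorphism groups are computed by Künneth:
\[
\mathrm{Hom}(\pi_i^*A,\pi_j^*B)=
\begin{cases}
H^0(A^{-1}B), & i=j,\\
H^0(A^{-1})\otimes H^0(B), & i\neq j.
\end{cases}
\]

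By Krull--Schmidt, $\mathcal{E}_1$ is isomorphic to a sub-sum of these line bundles. Since $\sigma^*\mathcal{E}_1\cong\mathcal{E}_1$ as abstract bundles, the index sets must be $S_n$-stable whenever the summands are pairwise non-isomorphic.

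\emph{Case $L,M$ both nontrivial.} All $2n$ summands are pairwise non-isomorphic, so each of $\mathcal{E}_1,\mathcal{E}_2$ is abstractly $\bigoplus_i\pi_i^*L$ or $\bigoplus_i\pi_i^*M$. In particular both have rank $n$, so I may relabel the $\mathcal{E}$'s freely. Since $L\not\cong M$, the groups $H^0(LM^{-1})$ and $H^0(ML^{-1})$ cannot both be nonzero. Also the cross terms $H^0(M^{-1})\otimes H^0(L)$ vanish, because a nontrivial line bundle and its dual cannot both have sections. Swap $L,M$ so that $\mathrm{Hom}(\bigoplus\pi_i^*M,\bigoplus\pi_i^*L)=0$, and relabel so that $\mathcal{E}_2\cong\bigoplus\pi_i^*M$. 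Then the composite $\mathcal{E}_2\hookrightarrow T_{X^n}\to\bigoplus\pi_i^*L$ vanishes, so $\mathcal{E}_2\subseteq\bigoplus\pi_i^*M$. Equal ranks and the fact that $\mathcal{E}_2$ is a subbundle give equality, which is (a).

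\emph{Case $L$ trivial, $M$ nontrivial.} Now $\mathcal{E}_k\cong\mathcal{O}^{a_k}\oplus\bigoplus_{i\in I_k}\pi_i^*M$. The $\pi_i^*M$ are pairwise non-isomorphic and not trivial, so $I_k$ is $S_n$-stable, i.e.\ $\emptyset$ or $[n]$. The trivial part is governed by global sections. We have $H^0(\bigoplus\pi_i^*L)=\mathbb{C}^n$, the permutation representation of $S_n$, which decomposes as the trivial representation plus the standard one; these correspond exactly to $\mathcal{F}_1$ and $\mathcal{F}_2$.

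\begin{itemize}
\item If $h^0(M)=0$, then $H^0(T_{X^n})=\mathbb{C}^n$ is the whole trivial part, and $\mathrm{Hom}(\pi^*M,\pi^*L)=H^0(M^{-1})$-type terms are harmless. I would show that whichever $\mathcal{E}_k$ contains no $M$-summands is generated by its global sections. These sections form an $S_n$-subrepresentation of $\mathbb{C}^n$, so they give $\mathcal{F}_1$, $\mathcal{F}_2$ or everything; in the last case the other summand is $\bigoplus\pi_i^*M$, which is case (a). The rank inequality then identifies $\mathcal{E}_1=\mathcal{F}_k$, which is (i).
\item If $h^0(M)\neq0$, then $H^0(M^{-1})=0$, so $\mathrm{Hom}(\bigoplus\pi_i^*M,\bigoplus\pi_i^*L)=0$ and the $M$-part sits inside $\bigoplus\pi_i^*M$ as before. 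Its complementary summand meets $\bigoplus\pi_i^*L$ in an $S_n$-invariant trivial subbundle determined by sections, which gives (ii) or (a).
\end{itemize}

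The main obstacle I expect is this last identification. An abstractly trivial $S_n$-invariant summand might a priori be the graph of a map into the $M$-part, and it is not immediate that it coincides with $\mathcal{F}_1$ or $\mathcal{F}_2$. I would handle it by projecting onto $\bigoplus\pi_i^*L$ and using the Hom-vanishing in the appropriate direction to kill the graph component. When that direction does not vanish (the subcase $h^0(M)\neq0$ for the $\mathcal{O}$-part), I would instead argue with the other summand $\mathcal{E}_{k'}$ and take complements.
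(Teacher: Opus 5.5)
Your proof follows the same route as the paper's. Krull--Schmidt pins down the isomorphism types, $S_n$-invariance forces the index sets to be $S_n$-stable, Hom-vanishing upgrades isomorphisms to equalities, and the subrepresentations of the permutation representation identify the trivial invariant summands with $\mathcal{F}_1,\mathcal{F}_2$.

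It is more careful than the paper in one respect. In Case 1, and in the subcase $J=[n]$ of Case 2, the paper only shows that $\mathcal{E}_2$ is \emph{isomorphic} to the relevant sum before asserting (a). Your Hom-vanishing step gives the actual equality of subbundles.

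Your relabelling of $\mathcal{E}_1,\mathcal{E}_2$ in the equal-rank case is also genuinely needed. Take $X=\mathbb{P}^1\times C$ with $g(C)\ge 2$, and let $L$, $M$ be the pulled-back tangent bundles of the two factors. A nonzero $s\in H^0(LM^{-1})$ gives a second complement $M'=\{(s(m),m)\}$ of $L$. The pair $\mathcal{E}_1=\bigoplus_i\pi_i^*L$, $\mathcal{E}_2=\bigoplus_i\pi_i^*M'$ then satisfies neither (a) nor (b) for the given $L,M$, unless the two summands are interchanged. This is harmless for the application, since Theorem \ref{crucial} is symmetric in $L$ and $M$.

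Two local slips need repair.

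First, the cross terms $H^0(M^{-1})\otimes H^0(L)$ need not vanish before you choose the swap. In the example above both factors are nonzero, since $H^0(L)\supseteq H^0(\mathcal{O}(2))$ and $H^0(M^{-1})=H^0(K_C)$. Also, ``a line bundle and its dual'' is not the relevant pair here. The correct argument: swap so that $H^0(LM^{-1})=0$. Then $H^0(L)\otimes H^0(M^{-1})=0$ as well, because nonzero sections of $L$ and $M^{-1}$ would multiply to a nonzero section of $LM^{-1}$.

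Second, in the bullet $h^0(M)=0$, consider the case where the $M$-free summand $\mathcal{E}_k$ is all of $\bigoplus_i\pi_i^*L$. You cannot conclude that the other summand \emph{equals} $\bigoplus_i\pi_i^*M$. If $h^0(M^{-1})\neq 0$, it can be the graph of a nonzero $S_n$-equivariant map $\bigoplus_i\pi_i^*M\to\bigoplus_i\pi_i^*L$. An example is $X=D\times Y$ with $D$ elliptic and $g(Y)\ge 2$, where $L=\mathcal{O}_X$ and $M\cong K_X^{-1}$, so $h^0(M^{-1})=p_g(X)>0$. Use instead the equality $\mathcal{E}_k=\bigoplus_i\pi_i^*L$ that you have already proved. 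Both ranks are $n$, so relabel so that this summand is $\mathcal{E}_2$ and swap $L,M$ to obtain (a).

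With these two corrections the argument is complete.
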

\begin{proof}
  Note that  \[
T_{X^n} = \left( \bigoplus_i \pi_i^* L \right) \oplus \left( \bigoplus_j \pi_j^* M \right)
\]
is a decomposition into line bundles. By \cite[Theorem 3]{atiyah1956krull}, there are $I, J \subseteq [n]$ with
\[
\mathcal{E}_1 \cong \left( \bigoplus_{i \in I} \pi_i^* L \right) \oplus \left( \bigoplus_{j \in J} \pi_j^* M \right),
\]
and 
\[
\mathcal{E}_2 \cong \left( \bigoplus_{i \not\in I} \pi_i^* L \right) \oplus \left( \bigoplus_{j \not\in J} \pi_j^* M \right).
\]

Since $\mathcal{E}_1$ is $S_n$-invariant, we have
\begin{equation}\label{LM}
   \left( \bigoplus_{i \in I} \pi_i^* L \right) \oplus \left( \bigoplus_{j \in J} \pi_j^* M \right)
\cong
\left( \bigoplus_{i \in \sigma(I)} \pi_i^* L \right) \oplus \left( \bigoplus_{j \in \sigma(J)} \pi_j^* M \right) 
\end{equation}
for all $\sigma\in S_n.$ 

Since $X$ is not an abelian surface, by Lemma \ref{L+L} we have $L \not\cong M$. In particular, at most one of $L, M$ can be trivial. We consider the cases separately.

\textbf{Case 1:} $L, M$ are both nontrivial. So, $\mathcal{O}_X, L, M$ are pairwise nonisomorphic. It easily follows that $\{ \pi_i^* L \}_{i \in [n]}$ and $\{ \pi_j^* M \}_{j \in [n]}$ are pairwise nonisomorphic. Using \eqref{LM} and \cite[Theorem 3]{atiyah1956krull}, we get $\sigma(I) = I$, $\sigma(J) = J$ for all $\sigma \in S_n$. Thus each of $I, J$ is either $[n]$ or $\emptyset$. As $\mathcal{E}_1, \mathcal{E}_2 \neq 0$, we see that $(a)$ holds, possibly after swapping $L$ and $M$.

\textbf{Case 2:} Exactly one of $L, M$, say $L$, is trivial. 

\medskip

Note that $\mathcal{O}_{X^n}, \{ \pi_i^* M \}_{i \in [n]}$ are pairwise nonisomorphic. By \eqref{LM} and \cite[Theorem 3]{atiyah1956krull}, we have $J = \sigma(J)$ for all $\sigma \in S_n.$ So $J = \emptyset$ or $[n]$. If $J = [n]$, then $\operatorname{rank} \mathcal{E}_1 \leq \operatorname{rank} \mathcal{E}_2$ forces $I = \emptyset$. Now swapping $L, M$, we see that $(a)$ holds.

Now suppose $J = \emptyset$. So,
\begin{equation}\label{E1E2}
    \mathcal{E}_1\cong \mathcal{O}_{X^n}^{|I|} \text{ and }
    \mathcal{E}_2\cong \mathcal{O}_{X^n}^{n-|I|}\oplus \left( \bigoplus_{j \in [n]} \pi_j^* M \right)
\end{equation}
We first prove the following claim.

\underline{\textit{Claim:}} The only proper nonzero $S_n$-invariant subbundles of  $ \bigoplus_i \pi_i^* L$  are $\mathcal{F}_1$ and $\mathcal{F}_2$.

\begin{proof}
    Clearly $\mathcal{F}_1$ and $\mathcal{F}_2$ proper nonzero $S_n$-invariant subbundles. we show they are the only ones.
    
Note that for a projective variety $Y$ with an action of a finite group $G$, $G$-linearizations of $\mathcal{O}_Y^n$ are in one-to-one correspondence with homomorphisms $G \to GL(n,\mathbb{C})$.
If $\xi$ is a trivialization of $L$, $\xi$ induces a trivialization of $\bigoplus_i \pi_i^* L$ , and the $S_n$-linearization of $\bigoplus_i \pi_i^* L$ coming from the $S_n$-linearization of $T_{X^n}$ corresponds to the permutation representation $\rho:S_n \to GL(n,\mathbb{C})$.

So the $S_n$-invariant subbundles of $\bigoplus_i \pi_i^* L$ are in one-to-one correspondence with subrepresentations of $\rho$. It is well-known that $\rho$ has exactly two proper nonzero subrepresentations. This shows $\mathcal{F}_1$ and $\mathcal{F}_2$ are the only ones.
\end{proof}
Now we return to the proof of Case $2$.
We analyze two subcases separately.

\textbf{Subcase 1:} $h^0(X, M) = 0$. Hence
\[
\mathrm{Hom}\!\left(\mathcal{E}_1, \bigoplus_{j\in[n]} \pi_j^* M\right)
\cong
\mathrm{Hom}\!\left(\mathcal{O}_{X^n}^{|I|},  \bigoplus_{j\in[n]}\pi_j^* M \right) = 0.
\]
So $\mathcal{E}_1 \subseteq \bigoplus_{i\in[n]} \pi_i^* L$ is $S_n$-invariant subbundle, hence by Claim, $(i)$ of $(b)$ holds.

\medskip

\textbf{Subcase 2:} $h^0(X, M^{-1}) \neq 0$.  As $M$ is nontrivial, we must have $h^0(X, M^{-1}) = 0$. Hence
\[
\mathrm{Hom}\!\left(\bigoplus_{j\in[n]} \pi_j^* M,\bigoplus_{i\in [n]} \pi_i^* L \right) \cong\mathrm{Hom}\!\left(\bigoplus_{j\in [n]} \pi_j^* M, \mathcal{O}_{X^n}^n\right) = 0.
\]
So any subbundle of $T_{X^n}$ isomorphic to $\bigoplus_{j\in [n]} \pi_j^* M$ must be contained in the factor $\bigoplus_{j\in[n]} \pi_j^* M$, hence is equal to that factor. Since $\mathcal{E}_2$ contains a subbundle isomorphic to $\bigoplus_{j\in[n]} \pi_j^* M$, we see that $
\mathcal{E}_2 \supseteq \bigoplus_{j\in[n]} \pi_j^* M.$ So, $\mathcal{E}_2=\mathcal{F}\oplus \left(\bigoplus_{j\in [n]} \pi_j^* M\right)$ for a proper $S_n$-invariant subbundle
$\mathcal{F}$ of $\bigoplus_{i\in[n]} \pi_i^* L$. If $\mathcal{F}=0$, then $(a)$ holds. If $\mathcal{F}\neq 0$, then by Claim $\mathcal{F}=\mathcal{F}_1$ or $\mathcal{F}_2$, so $(ii)$ of $(b)$ holds.

\end{proof}
\section{Non-extendability of some subbundle of $T_{X^n}$ to $X^{[n]}$}
We set up the following notations which will be followed throughout this section. Let $X$ be a smooth projective surface, $n\geq 2$ an integer. Let $f : X^n \to X^{(n)}$ be the Hilbert-Chow morphism, and $q : X^n \to X^{(n)}$ the quotient map, $Z $ the singular locus of $X^{(n)}$, $E=f^{-1}(Z)$, and $\Tilde{Z}=q^{-1}(Z)$, the big diagonals, and $j:X^n\setminus \Tilde{Z}\to X^n$ the inclusion. So,  $f:X^{[n]} \setminus E \cong X^{(n)} \setminus Z$ is an isomorphism and  $q:X^{n} \setminus \Tilde{Z} \to X^{(n)} \setminus Z$ is finite \'etale. Let $\pi_i:X^n\to X$ be the projections.

First we make the following definitions for a smooth projective surface $X$.
\begin{enumerate}
    \item Given a direct summand $\mathcal{E}$ of $T_{X^{[n]}}$, we define an $S_n$-invariant direct summand $\widetilde{\mathcal{E}}$ of $T_{X^n}$ as follows:

Let $\mathcal{F}$ be a subbundle of $T_{X^{[n]}}$ such that $T_{X^{[n]}} = \mathcal{E} \oplus \mathcal{F}.$ Let
\[
\Tilde{\mathcal{E}}' = q^* f_*\big(\mathcal{E}|_{X^{[n]} \setminus E} \big), 
\quad 
\Tilde{\mathcal{F}}' = q^* f_*\big(\mathcal{F}|_{X^{[n]} \setminus E} \big).
\]

Clearly both are subbundles of $T_{X^n \setminus \widetilde{Z}}$ with direct sum $T_{X^n \setminus \widetilde{Z}}$.
Now we define
\[
\widetilde{\mathcal{E}} := j_* \Tilde{\mathcal{E}}', 
\quad 
\widetilde{\mathcal{F}} := j_*\Tilde{\mathcal{F}}' .
\]

Both are naturally subsheaves of $j_* T_{X^n \setminus \widetilde{Z}} = T_{X^n}$, as $\mathrm{codim}_{X^n}(\widetilde{Z}) = 2$.

Also, $\widetilde{\mathcal{E}} \oplus \widetilde{\mathcal{F}} = T_{X^n}$, 
so $\widetilde{\mathcal{E}}$ is a direct summand of $T_{X^n}$, in particular it is a subbundle.

Finally, $\widetilde{\mathcal{E}}$ is $S_n$-invariant, as $\Tilde{\mathcal{E}}|_{X^n \setminus \widetilde{Z}}$ is $S_n$-invariant.
\item Given a subbundle $V$ of $T_{X^n}$, we say $V$ is \emph{extendable} if $V = \widetilde{\mathcal{E}}$ for some direct summand $\mathcal{E}$ of $T_{X^{[n]}}$.
\end{enumerate}
\vspace{1em}
The following result will be used as a crucial step in the proof of Theorem \ref{main}.
\begin{theorem}\label{crucial}
    
Let $X$ be a smooth projective surface, and let $L, M$ be line subbundles of $T_X$ with $T_X = L \oplus M$.
Then the subbundle $\bigoplus_{i=1}^n \pi_i^* M$
of $T_{X^n}$ is not extendable.
\end{theorem}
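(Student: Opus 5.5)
The plan is to argue by contradiction with a local computation near a general point of the exceptional divisor $E$. There, $X^{[n]}$ looks locally analytically like $X^{[2]}\times X^{n-2}$, and $X^{[2]}$ is locally $\mathrm{Bl}_0(\mathbb{C}^2)/\pm1\times\mathbb{C}^2$.

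Suppose $\bigoplus_i\pi_i^*M=\widetilde{\mathcal{E}}$ for a direct summand $\mathcal{E}$ of $T_{X^{[n]}}$. Then $\mathcal{E}$ is a subbundle: the quotient $T_{X^{[n]}}/\mathcal{E}\cong\mathcal{F}$ is locally free. Moreover, on $X^{[n]}\setminus E$, $\mathcal{E}$ is the descent of $\bigoplus_i\pi_i^*M|_{X^n\setminus\widetilde Z}$. So $\mathcal{E}$ is the unique saturated extension of this descended subbundle across $E$. It therefore suffices to find one point of $E$ at which this saturated extension fails to be a subbundle.

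\textbf{Step 1 (reduction to $n=2$).} Take a point $z=(p,p,p_3,\dots,p_n)\in X^n$ with $p,p_3,\dots,p_n$ distinct. In an analytic neighbourhood, $X^{(n)}\cong X^{(2)}\times X^{n-2}$ and $X^{[n]}\cong X^{[2]}\times X^{n-2}$ compatibly with $f$ and $q$. Under this identification, $\bigoplus_i\pi_i^*M$ becomes $(\pi_1^*M\oplus\pi_2^*M)\boxplus\bigoplus_{i\ge3}\pi_i^*M$. Only the first summand matters, so we reduce to $n=2$ near a point of the diagonal.

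\textbf{Step 2 (local coordinates).} Choose local coordinates $(x,y)$ on $X$ at $p$ with $L_p=\langle\partial_x\rangle$ and $M_p=\langle\partial_y\rangle$. Let $m=\partial_y+\alpha\partial_x$ be a local frame of $M$, with $\alpha(p)=0$. On $X^2$ use $u=x_1-x_2$, $v=y_1-y_2$, $s=x_1+x_2$, $t=y_1+y_2$; the involution is $(u,v)\mapsto(-u,-v)$. The bundle $\pi_1^*M\oplus\pi_2^*M$ is spanned by $m^{(1)}+m^{(2)}$, which is $S_2$-invariant and close to $2\partial_t$, and by $m^{(1)}-m^{(2)}$, which is anti-invariant and equals $2\partial_v+O(|u|+|v|)$. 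The anti-invariant part of the $O(|u|+|v|)$ term, i.e.\ the part that changes the direction of the leading vector field, is controlled by $d\alpha$ at $p$.

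Near the point of the exceptional curve corresponding to the direction $u$ (that is, $v/u=0$), $X^{[2]}$ has coordinates $w=u^2$, $b=v/u$, $s$, $t$. A direct computation gives
\[
\partial_u=\tfrac1u\,(2w\partial_w-b\partial_b),\qquad \partial_v=\tfrac1u\,\partial_b .
\]
Hence the descended vector field $u\,(m^{(1)}-m^{(2)})$ is, to leading order, $\partial_b$ plus correction terms. The claim to establish is that the extension of the line field on the quotient is not saturated-locally-free at the point $w=b=0$.

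\textbf{Main obstacle.} The leading term computed this naively is $\partial_b$, which extends without difficulty. So the obstruction must come from a finer analysis. I would first test the contrapositive structure: compute the descended rank-$2$ bundle $\langle m^{(1)}+m^{(2)},\,m^{(1)}-m^{(2)}\rangle$ together with the complementary descended bundle $\langle \ell^{(1)}+\ell^{(2)},\,\ell^{(1)}-\ell^{(2)}\rangle$, where $\ell$ frames $L$. The complementary anti-invariant field descends to $2w\partial_w-b\partial_b+\dots$, which has an isolated zero at $w=b=0$. Since $\mathcal{F}$ would also have to be a subbundle complementary to $\mathcal{E}$ and both cannot simultaneously extend, the saturation of this complement is a line bundle that fails to be a subbundle at that point. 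This contradicts $T_{X^{[2]}}=\mathcal{E}\oplus\mathcal{F}$, because a complement of $\mathcal{E}$ would be forced to equal that saturation.

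The step I expect to be hardest is making the perturbation terms, coming from $\alpha$ and from the non-integrability of $M$, rigorous enough not to destroy the isolated zero. I would handle this by checking that the linear part at $w=b=0$ of the descended complementary vector field is still $\mathrm{diag}(2,-1)$ in the $(w,b)$-directions, so that the zero persists by a first-order argument. If instead it is the direction $M_p$ rather than $L_p$ that carries the zero, I would run the same argument with the roles of $L$ and $M$ swapped.
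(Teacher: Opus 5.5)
There is a genuine gap in your final step. Your framework is sound and matches the paper's: if $\bigoplus_i\pi_i^*M=\widetilde{\mathcal E}$, then $\mathcal E$ is a rank-$n$ subbundle whose fibres contain all limits of the descended subbundle from $X^{[n]}\setminus E$, so one bad point of $E$ suffices. The reduction to $n=2$ and the chart $w=u^2$, $b=v/u$ are also fine.

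At the point $w=b=0$ you chose (direction $L_p$), the $M$-part extends, as you found. You then shift the burden to the complement, asserting that a complement of $\mathcal E$ must be the saturation of the descent of $\bigoplus_i\pi_i^*L$. Nothing forces this. $\widetilde{\mathcal F}$ is only \emph{some} $S_n$-invariant complement of $\bigoplus_i\pi_i^*M$, i.e.\ the graph of an $S_n$-equivariant map $\bigoplus_i\pi_i^*L\to\bigoplus_i\pi_i^*M$. Such maps exist, e.g.\ for $X=C\times\mathbb P^1$ with $g(C)\ge 1$, where $H^0(X,L^{-1}\otimes M)\neq 0$. Take $\widetilde{\mathcal F}=\bigoplus_i\pi_i^*L'$, with $L'$ the graph of a section $\sigma$ of $L^{-1}\otimes M$ not vanishing at $p$. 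In straightened coordinates $u\,\bigl((\ell')^{(1)}-(\ell')^{(2)}\bigr)$ equals $2\sigma(p)\,\partial_b\neq 0$ at $w=b=0$, so this complement is fine there. Your computation at that point therefore gives no contradiction; the claim that \emph{both cannot simultaneously extend} is exactly what had to be proved.

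The missing idea is to look instead at the point of the exceptional curve corresponding to the direction $M_p$, where $\mathcal E$ itself fails. This is the paper's point $\tilde{\mathfrak s}$, a length-$2$ scheme on the vertical line $z=0$.

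First straighten both line fields so that $\ell=\partial_x$ and $m=\partial_y$. In dimension $2$ this is automatic via local first integrals (the paper cites Beauville's Theorem C). This removes your worry about $\alpha$: a line field is always integrable.

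Then $m^{(1)}-m^{(2)}=2\partial_v$. In the other chart $a=u/v$, $w'=v^2$, one gets $v\,\partial_v=2w'\partial_{w'}-a\,\partial_a$. This field spans the anti-invariant part of the descended $M$-bundle off $E$, and it vanishes at $w'=a=0$. Approaching along $a=0$ and along $w'=\epsilon a$ gives limit lines spanning the whole $(w',a)$-plane.

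Hence the fibre of $\mathcal E$ at that point would contain $\partial_{w'}$, $\partial_a$, $\partial_t$ and the $n-2$ directions from the other points. That is dimension $\ge n+1>n$, a contradiction. This is exactly the paper's closure-of-total-space computation in the chart $\tau$, whose $y_1$ and $y_4$ play the roles of $a$ and $w'$. Your closing remark about swapping $L$ and $M$ points this way, but as phrased it would rerun the flawed complement argument rather than this direct one.
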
 
\begin{proof}
The idea of the proof is the following: since $f^{-1} \circ q :X^n\setminus{\widetilde{Z}} \to X^{[n]}\setminus E$ is finite \'etale, given a rank $n$ $S_n$-invariant subbundle $V$ of $T_{X^n}$, $V|_{X^n\setminus{\widetilde{Z}}}$ is the pullback of a vector bundle $V'$ on $X^{[n]}\setminus E$ under $f^{-1} \circ q$. Now if $V=\widetilde{\mathcal{E}}$ for a direct summand subbundle $\widetilde{\mathcal{E}}$ of $T_{X^{[n]}}$, then the total space of $\mathcal{E}$ must be the closure of the total space $S$ of $V'$ in  the total space of $T_{X^{[n]}}$. Since we know $V$, we know $S$. We get a contradiction by showing that a fibre of the projection $S\to X^{[n]}$ has dimension $\geq n+1$. This is a local question, so we do the computation in a local chart of $X^{[n]}.$ What we might lose after going to the local chart is the description of the subbundles $L$ and $M$ of $T_X$, but if we use \cite[Theorem C]{beauville2000complex}, we can choose charts in $X$ in such a way that $L$ and $M$ correspond to tangents in horizontal and vertical directions, respectively. So we can get back $L$ and $M$ in the chart.

Now we carry out the idea. For a vector bundle $\mathcal{E}$ on a complex manifold $N$, denote by $\mathrm{Sp}(\mathcal{E})$ the total space of $\mathcal{E}$. Thus, $\mathrm{Sp}(\mathcal{E})$ is a complex manifold with a map to $N$, whose fibres are naturally vector spaces. Given a map $h:N\to N'$, we have an induced map $dh:\mathrm{Sp}(T_N)\to \mathrm{Sp}(T_{N'})$ of complex manifolds.

Let $r_1, r_2 : \mathbb{D}^{2} \to \mathbb{D}$ be the projections. By \cite[Theorem C]{beauville2000complex}, there is open embedding
\[
\phi : \mathbb{D}^{2} \to X
\]
such that
\[
\phi^* L = r_1^* T_{\mathbb{D}}, \phi^* M = r_2^* T_{\mathbb{D}}
\]
as subbundles of $T_{\mathbb{D}^{2}}$. Let $\phi_3,\cdots,\phi_n : \mathbb{D}^{ 2} \to X$ be open embeddings with the same property, such that the images of $\phi, \phi_3,\cdots,\phi_n$ are disjoint.

Note that given a length $2$ subscheme $\mathfrak{t}$ of $\mathbb{C}^2$, there is a unique line $\ell(\mathfrak{t})$ containing $\mathfrak{t}$.
For $y_1,y_2,y_3,y_4 \in \mathbb{C}$, let $\tau(y_1,y_2,y_3,y_4)$
be the unique $\mathfrak{t} \in (\mathbb{C}^2)^{[2]}$ such that:

\begin{enumerate}
\item[(i)] $\ell(\mathfrak{t}) = \{ (z,w) \in \mathbb{C}^2 \mid z = y_1 w + y_2 \}$,
\item[(ii)]If $[\mathfrak{t}] = (z_1,w_1) + (z_2,w_2)$, then $w_1 + w_2 = y_3$ and $(w_1 - w_2)^2 = y_4$.
\end{enumerate}

The map $\mathbb{C}^4 \xrightarrow{\tau} (\mathbb{C}^2)^{[2]}$
is an open embedding. Let $V = \tau^{-1}((\mathbb{D}^{2})^{[2]})$, where $(\mathbb{D}^{2})^{[2]}\subset (\mathbb{C}^2)^{[2]}$ is the analytic open subset consisting of subschemes supported inside $\mathbb{D}^{2}.$ Let
$V_1 = V \times (\mathbb{D}^{2})^{n-2}$,
an open subset of $\mathbb{C}^{2n}$.

We write elements of $V_1$ as $\underline{y}= (y_1,y_2,\dots,y_{2n})$, where $(y_1,y_2,y_3,y_4) \in V_1,$ and $y_i \in \mathbb{D}$ for $i\geq 5$. We will write elements of $\mathrm{Sp}(T_{V_1}) = V_1 \times \mathbb{C}^{2n}$,
as $\underline{y} \times \underline{v'}$, where $\underline{y} \in V_1$, $v' = (v_1',\dots,v_{2n}') \in \mathbb{C}^{2n}, v_i'\in T_{y_i}\mathbb{C} = \mathbb{C}.$

For $ \underline{y}\in V_1$, let $\eta_1(\underline{y}) \in X^{[n]}$ be the disjoint union of
\[
\varphi(\tau(y_1,y_2,y_3,y_4)), \; \varphi_3(y_5,y_6), \dots, \varphi_n(y_{2n-1},y_{2n}).
\]
So, $\eta_1:V_1\to X^{[n]}$ is an open embedding. Let $\Tilde{\mathfrak{s}}=\eta_1(0).$ Let $D = \{ y \in V_1 \mid y_4 = 0 \}$. Note that $\eta_1^{-1}(E) = D$.

Note that, since the map $f^{-1} \circ q :X^n\setminus{\widetilde{Z}} \to X^{[n]}\setminus E$ is finite \'etale, we have a map
\[
d(f^{-1} \circ q) : \mathrm{Sp}(T_{X^n\setminus{\widetilde{Z}}}) \to \mathrm{Sp}(T_{X^{[n]}\setminus E}),
\]
which is also finite \'etale.

Let $S \subset \mathrm{Sp}(T_{X^n})$ be the closure of
\[
d(f^{-1} \circ q)\mathrm{Sp}\left( \bigoplus_i \pi_i^* M \right)
\]
in $\mathrm{Sp}(T_{X^{[n]}})$, and let $g:S \to X^{[n]}$ be
the restriction of the projection $\mathrm{Sp}(T_{X^{[n]}}) \to X^{[n]}$.

Each fibre of $g$ over $y \in X^{[n]}$ is a vector subspace of $T_{X^{[n]}}(y)$, as $\bigoplus_i \pi_i^* M$ is $S_n$-invariant. Hence this holds for all $y \in X^{[n]}$.

The following claim would complete the proof, as if
\[
\bigoplus_i \pi_i^* M = \widetilde{\mathcal{E}}
\]
for some direct summand subbundle $\mathcal{E}$ of $T_{X^{[n]}}$, necessarily of rank $n$, then $\mathrm{Sp}(\mathcal{E})\supset S$ over $X^{[n]}\setminus E$, so $\mathrm{Sp}(\mathcal{E})\supset S$.

\underline{\textit{Claim:}} $\dim g^{-1}(\Tilde{\mathfrak{s}}) \geq n+1$.
\begin{proof} 
Let
\[
V_2 = \{ (z_1,w_1,\dots,z_n,w_n) \in \mathbb{D}^{ 2n} \mid w_1 \neq w_2\}.
\]

We write elements of $V_2$ as $(\underline{z},\underline{w}) = (z_1,w_1,\dots,z_n,w_n)$, elements of $\mathrm{Sp}(T_{V_2}) = V_2 \times \mathbb{C}^{2n}$ as $(\underline{z},\underline{w})\times \underline{v}$, where $\underline{v} = (u_1,v_1,u_2,v_2,\dots,u_n,v_n), u_i \in T_{z_i}\mathbb{D} = \mathbb{C}, \; v_i \in T_{w_i}\mathbb{D} = \mathbb{C}$.

Define $\beta_i : V_2 \to \mathbb{D}$, by $\beta_i(\underline{z},\underline{w}) = w_i$. Define $\eta_2 : V_2 \to X^n\setminus \Tilde{Z}$ by
\[
\eta_2(\underline{z},\underline{w}) = \big( \varphi(z_1,w_1), \varphi(z_2,w_2),\phi_3(z_3,w_3), \dots, \phi_n(z_n,w_n) \big).
\]
Clearly, $\eta_2$ is an open embedding, and
\[
\eta_2^* \pi_i^* M = \beta_i^* T_\mathbb{D}
\]
as subbundles of $T_{V_2}$, for $1\leq i\leq n$.

So,
\[
\mathrm{Sp}\big(\bigoplus_i \beta_i^* T_\mathbb{D}\big) = (d\eta_2)^{-1} \left( \mathrm{Sp}\big(\bigoplus_i \pi_i^* M \big|_{X^n\setminus \Tilde{Z}}\big) \right).
\]

Define $\pi : V_2 \longrightarrow V_1\setminus D$
by $\pi(\underline{z},\underline{w}) = \underline{y}$, where $\underline{y} = (y_1, \dots, y_{2n})$ is given by
\begin{align*}
y_1 &= \frac{z_2 - z_1}{w_2 - w_1}, \\
y_2 &= \frac{z_1 w_2 - z_2 w_1}{w_2 - w_1}, \\
y_3 &= w_1 + w_2, \\
y_4 &= w_1 w_2,
\end{align*}
and
\[
y_{2i-1} = z_i, \quad y_{2i} = w_i \quad \text{for } 3 \leq i \leq n.
\]
One can check that the following diagram commutes:
  \begin{center}
\begin{tikzcd}
V_2 \arrow[r, "\eta_2"] \arrow[d,"\pi"]
& X^n\setminus \Tilde{Z} \arrow[d, "f^{-1}\circ q" ] \\
V_1\setminus D  \arrow[r,"\eta_1" ]
& |[, rotate=0]| X^{[n]}\setminus E
\end{tikzcd}.
\end{center}

So, we have a commutative diagram:
\[
\begin{tikzcd}
Sp\big(\bigoplus_i\beta_i^*T_{\mathbb{D}}\big) \arrow[r] \arrow[d,hook] & Sp\big(\bigoplus_i\pi_i^*M\big) \arrow[d,hook] \\
Sp(T_{V_2}) \arrow[r, "d\eta_2"] \arrow[d, "d\pi"] & Sp(T_{X^n\setminus \Tilde{Z}}) \arrow[d, "d(f^{-1}\circ q)"] \\
Sp(T_{V_1\setminus D}) \arrow[r, "d\eta_1"] \arrow[d] & Sp(T_{X^{[n]}\setminus E}) \arrow[d] \\
Sp(T_{V_1}) \arrow[r, "d\eta_1"] & Sp(T_{X^{[n]}})
\end{tikzcd},
\]
whose all rows are open embeddings, the columns of the bottom square are inclusion maps and the top and bottom squares are Cartesian.

Note that $S$ is the closure of the image of $\mathrm{Sp}\big(\bigoplus_i\pi_i^*M\big)$ under the composition of the maps in the right column. Let $S_1$ be the closure of the image of $\mathrm{Sp}\big(\bigoplus_i\beta_i^*T_{\mathbb{D}}\big)$ under the composition of the maps in the left columnn, and let $g_1$ be the restriction to $S_1$ of the projection $Sp(T_{V_1})=V_1\times\mathbb{C}^{2n}\to V_1.$ As $\eta_1(0)=\Tilde{\mathfrak{s}},$ it suffices to show $\dim g_1^{-1}(0) \geq n+1$. This will follow if we can show the following statement:

Let $\underline{v'} = (v_1', v_2', \dots, v_{2n}') \in \mathbb{C}^{2n}$
be such that
\[v_2' = 0, \quad v_{2i+1}' = 0 \ \text{for } 2 \leq i \leq n-1, \quad v_1', v_{4}' \neq 0.
\]
Then $\underline{0} \times \underline{v'} \in S_1$.

We now proceed to prove the above statement. First note that if $\underline{v} = (u_1, v_1, \dots, u_n, v_n) \in \mathbb{C}^{2n}$
is such that $u_i = 0$ for all $i$, then $d\pi((\underline{z},\underline{w})\times \underline{v}) = \underline{y} \times \underline{v'},$
where $y = \pi(\underline{z},\underline{w})$ and $\underline{v'} = (v_1', \dots, v_{2n}')$ is given by
\begin{align*}
v_1' &= -\frac{z_2 - z_1}{(w_2 - w_1)^2}(v_2 - v_1), \\
v_2' &= \frac{z_2 - z_1}{(w_2 - w_1)^2}(w_1 v_2 - w_2 v_1), \\
v_3' &= v_1 + v_2, \\
v_4' &= 2(w_2 - w_1)(v_2 - v_1), 
\end{align*}
and
$$
v_{2i-1}' = 0, v_{2i}' = v_i
$$ for $3\leq i\leq n.$

Now let $y_1 \neq 0$ with $|y_1|$ sufficiently small. Define $\underline{y} = (y_1, \dots, y_{2n}) \in V_1 \setminus D$
by
\[
y_2 = 0, \quad y_3 = -y_1 \frac{v_3'}{v_1'}, \quad y_4 = -\frac{1}{2} y_1 \frac{v_4'}{v_1'}, \quad y_i = 0 \ \text{for } 5 \le i \le 2n.
\]

It suffices to show $\underline{y} \times \underline{v'} \in d\pi\!\left( \mathrm{Sp}\!\left( \bigoplus_i \beta_i^* T_{\mathbb{D}} \right) \right)$.

 There exist $w_1, w_2 \in \mathbb{C}$ such that
\[
w_1 + w_2 = y_3, \quad (w_1 - w_2)^2 = y_4.
\]
Define $w_i = 0$ for $3 \le i \le n$. Thus
\[
\underline{w} = (w_1, \dots, w_n) \in \mathbb{D}^{ n},
\]
for $|y_1|$ sufficiently small.

Let $\underline{z} = y_1 \underline{w}$. Define $v_1, v_2 \in \mathbb{C}$ by
\[
v_1 + v_2 = v_3', \quad v_2 - v_1 = \frac{v_4'}{2(w_2 - w_1)}.
\]
Let $v_i = v_{2i}'$ for $3 \le i \le n$, and $u_i = 0$ for $1\leq i\leq n$. Thus we get $\underline{v} = (u_1,v_1,...,u_n,v_n).$ Since $u_i = 0$ for all $i$, we have
\[
(\underline{z},\underline{w}) \times \underline{v} \in \mathrm{Sp}\big(\bigoplus_i\beta_i^*T_{\mathbb{D}}\big) \subseteq \mathrm{Sp}(T_{V_2}).
\]
Now from  the formula of $d\pi$, one can check that $d\pi((\underline{z},\underline{w})\times \underline{v}) = \underline{y} \times \underline{v'}.$ Thus  $\underline{y} \times \underline{v'} \in d\pi\!\left( \mathrm{Sp}\!\left( \bigoplus_i \beta_i^* T_{\mathbb{D}} \right) \right)$, and we are done.
\end{proof}
\end{proof}

\section{Decomposition of tangent bundle of punctual Hilbert scheme }
In this section, we shall prove the following theorem, which is a more precise version of Theorem \ref{A}.
\begin{theorem}\label{main}
    Let $X$ be a smooth projective surface, $n \geq 2$ an integer. Then the following holds:
\begin{enumerate}
\item If $X$ is an abelian surface, then
\[
T_{X^{[n]}} \cong \mathcal{O}_{X^{[n]}}^{\oplus 2} \oplus \mathcal{E}
\]
for an indecomposable vector bundle $\mathcal{E}$. 

\item If $X$ is of class $\tilde{\mathcal{C}}$., then
\[
T_{X^{[n]}} \cong \mathcal{O}_{X^{[n]}} \oplus \mathcal{E}
\]
for an indecomposable vector bundle $\mathcal{E}$.

\item In all other cases, $T_{X^{[n]}}$ is indecomposable.

Further, in $(1)$ and $(2)$, there is a smooth morphism $\pi:X^{[n]}\to F'$ with $F'$ an abelian variety such that $\mathcal{E} = \ker\!\left( T_{X^{[n]}} \xrightarrow{d\pi} \pi^* T_{F'} \right).$
\end{enumerate}
\end{theorem}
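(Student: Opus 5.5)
The plan is to pass from a decomposition of $T_{X^{[n]}}$ to one of $T_{X^n}$ and then use the results of \S 4 and \S 5. Suppose $T_{X^{[n]}}=\mathcal{E}\oplus\mathcal{F}$ with both summands nonzero. The construction at the start of \S 5 gives $S_n$-invariant direct summands $\widetilde{\mathcal{E}}$ and $\widetilde{\mathcal{F}}$ with $T_{X^n}=\widetilde{\mathcal{E}}\oplus\widetilde{\mathcal{F}}$. Both are nonzero, since they have the same ranks as $\mathcal{E}$ and $\mathcal{F}$. Moreover $\widetilde{\mathcal{E}}$ and $\widetilde{\mathcal{F}}$ are both extendable by definition.

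\textbf{Step 1: reduce to the split case.} If $T_X$ is indecomposable, Theorem \ref{Sn invariant1} rules out such a decomposition, so $T_{X^{[n]}}$ is indecomposable. Hence we may assume $T_X=L\oplus M$.

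\textbf{Step 2: $X$ not abelian.} Apply Theorem \ref{Sn invariant} to $\widetilde{\mathcal{E}},\widetilde{\mathcal{F}}$, ordered by rank. Case (a) says the larger summand equals $\bigoplus_i\pi_i^*M$. That summand is extendable, which contradicts Theorem \ref{crucial}. So case (b) holds, and in particular $L\cong\mathcal{O}_X$. Theorem \ref{O+K} then puts $X$ in class $\tilde{\mathcal{C}}$; this proves (3).

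It remains to pin down the decomposition when $X\in\tilde{\mathcal{C}}$. In case (b)(i), one summand is $\mathcal{F}_1$ or $\mathcal{F}_2$. The complementary summand then contains the trivial part as a complement to a subrepresentation of the permutation representation $\rho$, together with $\bigoplus\pi_i^*M$. In case (b)(ii), the ranks show that one summand is the rank-one bundle $\mathcal{F}_1$, or else the summand $\mathcal{F}_2$ of rank $n-1$.

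\textbf{Step 3: rule out the bad options.} I would exclude $\mathcal{F}_2$ as an extendable summand, and also any rank $\geq 2$ piece of the trivial part. The tool is a local computation like the one in Theorem \ref{crucial}: in the chart $\eta_1$, the closure of the total space of $\mathcal{F}_2$ should have a fibre of too large dimension over $\Tilde{\mathfrak{s}}$.

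Heuristically, in those coordinates $\mathcal{F}_1$ is spanned by the horizontal translation field $\sum_i\partial_{z_i}$. This field is invariant and extends to $X^{[n]}$ because it comes from $H^0(T_X)$. By contrast, a sum-zero combination such as $\partial_{z_1}-\partial_{z_2}$ maps under $d\pi$ to a vector whose $y_1,y_2$ components blow up as $w_1\to w_2$. So limits should fill an extra direction, and I expect $\mathcal{F}_2$ to be non-extendable.

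This leaves exactly one trivial summand $\mathcal{O}$, corresponding to $\mathcal{F}_1$. Applying the same argument to the complement shows that the complement is indecomposable. Hence $T_{X^{[n]}}\cong\mathcal{O}\oplus\mathcal{E}$ with $\mathcal{E}$ indecomposable.

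\textbf{Step 4: the abelian case.} Here $T_X=\mathcal{O}^2$ and $T_{X^n}$ is trivial with $S_n$ acting by $\rho\otimes\mathbb{C}^2$. The $S_n$-invariant splittings correspond to subrepresentations. The extendable summands should be the invariant part $\mathbb{C}^2\otimes\mathbf{1}$ and its complement, and the same local argument excludes the standard-representation pieces. This gives $\mathcal{O}^2\oplus\mathcal{E}$ with $\mathcal{E}$ indecomposable.

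\textbf{Step 5: the smooth morphism.} Take $F'=\mathrm{Alb}(X)$ in the abelian case, so that $X^{[n]}\to X^{(n)}\to X$ is the sum map, which is smooth. In class $\tilde{\mathcal{C}}$, take $F'$ to be the elliptic curve $D/G$, or the Albanese elliptic curve, with $\pi$ the composite of the Albanese-type map $X\to F'$ with summation. Smoothness follows because the trivial vector fields from $H^0(T_X)$ act transitively on the fibres of the sum map, via translation by $\mathrm{Aut}^0$. The kernel of $d\pi$ is then a complement to the trivial summand, so it is isomorphic to $\mathcal{E}$ by Atiyah's Krull--Schmidt theorem. In fact it equals $\mathcal{E}$ once the decomposition is chosen compatibly.

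The main obstacle is Step 3: the local non-extendability computation for $\mathcal{F}_2$, and for the standard-representation pieces in the abelian case. It requires redoing the chart computation of Theorem \ref{crucial} for horizontal, sum-zero vectors.
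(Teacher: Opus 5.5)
Your Steps 1--2 are the paper's proof of (3). Step 5 is essentially the paper's construction via $Y=X^{[n]}\times_{F'}F\cong K\times F$ and Lemma \ref{basic}. One small correction there: the translation action is transitive on the base $F'$, with $\pi$ equivariant, and not on the fibres of $\pi$.

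\textbf{The gap: Steps 3--4 do not work as proposed.} You make the indecomposability of $\mathcal{E}$ rest on non-extendability claims that you do not prove, and the computation you sketch fails at the point you name. Take the chart $\eta_1$ with $\xi=\partial_z$. Then
\[
d\pi(\partial_{z_1}-\partial_{z_2})=\frac{1}{w_2-w_1}\bigl(-2\partial_{y_1}+y_3\partial_{y_2}\bigr),\qquad d\pi(\partial_{z_1}+\partial_{z_2})=\partial_{y_2},
\]
and $\partial_{z_i}\mapsto\partial_{y_{2i-1}}$ for $i\ge 3$.
\begin{itemize}
\item The blow-up is a common scalar factor. After rescaling, the image of $\mathcal{F}_2$ has a holomorphic frame that stays independent across $D$.
\item So near $\widetilde{\mathfrak{s}}$ the closure of its total space is an honest rank $n-1$ subbundle, and no fibre jumps.
\item The reason is that $\widetilde{\mathfrak{s}}$ is a double point tangent to $M$. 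A jump for horizontal sum-zero fields only occurs at double points tangent to $L$.
\item In the chart $w=y_1z+y_2$, $y_3=z_1+z_2$, $y_4=(z_1-z_2)^2$, the image of $\partial_{z_1}-\partial_{z_2}$ is $\frac{1}{z_2-z_1}\bigl(2y_1\partial_{y_1}-y_1y_3\partial_{y_2}-4y_4\partial_{y_4}\bigr)$. This vanishes on the codimension-two locus $y_1=y_4=0$, so the limiting fibre there is $n$-dimensional.
\item The same relocation would be needed in Step 4 for $\mathrm{std}\otimes\ell$, for every line $\ell\subset\mathbb{C}^2$.
\end{itemize}
Your handling of case (b)(ii) is also inaccurate. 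There the smaller summand is only some $S_n$-invariant complement of $\mathcal{F}_k\oplus\bigoplus_i\pi_i^*M$, possibly a graph, and need not be $\mathcal{F}_{3-k}$. What actually has to be excluded is extendability of $\mathcal{F}_1\oplus\bigoplus_i\pi_i^*M$.

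\textbf{How the paper avoids this.} No new local computation is needed. Once $T_{X^{[n]}}=\mathcal{L}\oplus\mathcal{E}$ with $\mathcal{E}=\ker d\pi$ is built, suppose $\mathcal{E}=\mathcal{G}_1\oplus\mathcal{G}_2$ with ranks $a<b$.
\begin{itemize}
\item Apply Theorem \ref{Sn invariant} to the three splittings $(\widetilde{\mathcal L},\widetilde{\mathcal G}_1\oplus\widetilde{\mathcal G}_2)$, $(\widetilde{\mathcal G}_1,\widetilde{\mathcal L}\oplus\widetilde{\mathcal G}_2)$ and $(\widetilde{\mathcal L}\oplus\widetilde{\mathcal G}_1,\widetilde{\mathcal G}_2)$. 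Theorem \ref{crucial} forces case (b) in all three.
\item If $h^0(X,M)=0$, the three distinct subbundles $\widetilde{\mathcal L}$, $\widetilde{\mathcal G}_1$, $\widetilde{\mathcal L}\oplus\widetilde{\mathcal G}_1$ would each equal $\mathcal F_1$ or $\mathcal F_2$.
\item If $h^0(X,M)\neq 0$, the three distinct subbundles $\widetilde{\mathcal G}_1\oplus\widetilde{\mathcal G}_2$, $\widetilde{\mathcal L}\oplus\widetilde{\mathcal G}_2$, $\widetilde{\mathcal G}_2$ would each equal $\mathcal F_1\oplus\bigoplus_j\pi_j^*M$ or $\mathcal F_2\oplus\bigoplus_j\pi_j^*M$.
\item Either way, three distinct subbundles would lie among two candidates, which is impossible.
\end{itemize}
In the abelian case the paper restricts $\mathcal{E}$ to $K=\pi^{-1}(0)=\mathrm{Kum}_n(A)$. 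There $\mathcal{E}|_K\cong T_K$, which is stable because $K$ is hyperk\"ahler, hence indecomposable.

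Your local route can probably be completed using the relocated charts. As written, though, Step 3, which you yourself flag as the main obstacle, is unproved and aimed at the wrong point of $E$.
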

We first prove the following Lemmas.
\begin{lemma}\label{basic}
    Let $X,Y$ be complex manifolds, and $G$ a group acting on $X \times Y$
by diagonal covering space action. Let
\[
X \times Y \xrightarrow{q} Z
\]
be the quotient under the action of $G$. Let $\pi_1:X\times Y\to X, \pi_2:X\times Y\to Y$ be the projections. Then there are unique subbundles
$E,F$ of $T_Z$ such that $E \oplus F = T_Z,$
and
\[
q^*E = \pi_1^* T_X \oplus 0, \quad q^*F = 0 \oplus \pi_2^* T_Y
\]
as subbundles of $T_{X \times Y}$.

Further, if the action of $G$ on $Y$ is a covering space action so that $Y/G$ is a complex manifold, then $F\cong f^*T_{Y/G}$, where $f:Z\to Y/G$ is the induced map.

\end{lemma}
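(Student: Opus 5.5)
The plan is to treat $q:X\times Y\to Z$ as a local biholomorphism and descend the natural splitting $T_{X\times Y}=\pi_1^*T_X\oplus\pi_2^*T_Y$ along $q$. Since the action is a covering space action, $dq$ induces an isomorphism $T_{X\times Y}\cong q^*T_Z$. Equivalently, $T_Z$ is identified with the descent of $T_{X\times Y}$ equipped with its natural $G$-linearization: $g$ acts through its differential $dg$. A subbundle of $T_Z$ therefore corresponds bijectively to a $G$-invariant subbundle of $T_{X\times Y}$ under $q^*$. This is a local statement: over an evenly covered open $U\subset Z$ we have $q^{-1}(U)=\bigsqcup_{g\in G} gU'$, and a $G$-invariant subbundle on $q^{-1}(U)$ is determined by its restriction to $U'\cong U$. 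Existence and uniqueness of $E,F$ then reduce to checking that $\pi_1^*T_X\oplus 0$ and $0\oplus\pi_2^*T_Y$ are $G$-invariant. For $E\oplus F=T_Z$, observe that $q^*(E\oplus F)\to q^*T_Z$ is an isomorphism, so $E\oplus F\to T_Z$ is an isomorphism, because $q$ is surjective and a local isomorphism.

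The invariance is where the diagonal hypothesis is used. Write $g(x,y)=(g x, g y)$. Then
\[ d g_{(x,y)}=\begin{pmatrix} (dg)_x & 0\\ 0 & (dg)_y\end{pmatrix} \]
with respect to $T_{(x,y)}=T_xX\oplus T_yY$. Hence $dg$ maps $T_xX\oplus 0$ into $T_{gx}X\oplus 0$, and similarly for the second factor. So both summands are $G$-invariant subbundles and descend. Uniqueness holds because $q^*$ is injective on subbundles: a subbundle of $T_Z$ is recovered from its pullback by taking images under $dq$ locally.

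For the second part, assume $G$ acts on $Y$ by a covering space action with quotient map $p:Y\to Y/G$. Then $f$ is defined by $f\circ q=p\circ\pi_2$, and $f$ is holomorphic since this can be checked locally on evenly covered sets. The map $df:T_Z\to f^*T_{Y/G}$ pulls back under $q$ to
\[ q^*df: T_{X\times Y}\to q^*f^*T_{Y/G}=\pi_2^*p^*T_{Y/G}, \]
and this equals $\pi_2^*(dp)\circ d\pi_2$. Since $p$ is étale, $dp:T_Y\to p^*T_{Y/G}$ is an isomorphism. So $q^*df$ kills $\pi_1^*T_X\oplus0=q^*E$ and restricts to an isomorphism on $q^*F=0\oplus\pi_2^*T_Y$. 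By faithfulness of pullback along the surjective local biholomorphism $q$, $df$ vanishes on $E$ and $df|_F:F\to f^*T_{Y/G}$ is an isomorphism. This gives $F\cong f^*T_{Y/G}$.

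The main obstacle is bookkeeping rather than substance: we need to set up the descent of subbundles along the covering $q$ carefully, namely the bijection between $G$-invariant subbundles upstairs and subbundles of $T_Z$ downstairs. Once that is in place, both parts reduce to the block-diagonal form of $dg$ and to the chain rule applied to $f\circ q=p\circ\pi_2$.
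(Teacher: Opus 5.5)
Your proof is correct and follows the same route as the paper, which simply says that $E$ and $F$ can be defined locally and glue because $G$ acts diagonally; this is your observation that $dg$ is block-diagonal, so both summands are $G$-invariant and descend along the covering $q$. Your chain-rule argument with $f\circ q=p\circ\pi_2$, giving $F\cong f^*T_{Y/G}$, fills in a step the paper leaves unstated.
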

\begin{proof}
    One can locally define $E$ and $F$ satisfying the above properties. They glue globally to subbundles because the $G$ acts diagonally on $X\times Y.$
\end{proof}
\begin{lemma}\label{uniform}
Let $X$ be an abelian surface or of class $\tilde{\mathcal{C}}$.
Then there is a complex manifold $E$, an commutative complex Lie group $F$,
and a discrete subgroup $G \subseteq F$ with an action of $G$ on $E$
such that if $G$ acts on $E \times F$ via the diagonal action,
where $G$ acts on $F$ by translation, then
\[
X \cong (E \times F)/G.
\]
\end{lemma}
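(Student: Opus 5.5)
We go through the possibilities for $X$ in turn and give an explicit uniformization $X\cong (E\times F)/G$ in each case.

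\textbf{Abelian surfaces.} Write $X=\mathbb{C}^2/\Lambda$. Take $E$ to be a point, $F=X$ (a commutative complex Lie group), and $G$ trivial. Equivalently, take $F=\mathbb{C}^2$, $G=\Lambda$ acting by translation, and $E$ a point.

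\textbf{Class $\mathcal{C}$ and bielliptic surfaces.} By definition $X=(D\times Y)/G$. Here $D$ is an elliptic curve, $G\subset \mathrm{Aut}^0(D)$ is a finite group of translations of $D$, and $G$ acts diagonally. For class $\mathcal{C}$ the curve $Y$ has general type; for bielliptic surfaces the standard description is $X=(D\times C)/G$ with $C$ elliptic, $G$ finite, $G$ acting on $D$ by translations and on $C$ faithfully not only by translations. Take $F=D$, $E=Y$ (respectively $E=C$), and let $G\subseteq F$ act on $E$ as given. Since $G$ acts freely on $F$ by translations, the diagonal action is free, and the quotient is $X$.

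\textbf{$\mathbb{P}^1$-bundles over an elliptic curve $B$ with split tangent bundle.} By \cite{ganong1985tangent}, and as recalled in \S 2, these are $\mathbb{P}_B(\mathcal{O}_B\oplus L)$ with $\deg L=0$, or $\mathbb{P}_B(\mathcal{E})$ with $\mathcal{E}$ indecomposable of rank $2$. In each case the ruled surface is a flat $\mathbb{P}^1$-bundle, given by a representation $\rho:\pi_1(B)=\mathbb{Z}^2\to PGL(2,\mathbb{C})$. Then
\[
X\cong(\mathbb{P}^1\times\mathbb{C})/\mathbb{Z}^2,
\]
with $\mathbb{Z}^2=\Lambda\subset\mathbb{C}=F$ acting on $\mathbb{C}$ by translation and on $E=\mathbb{P}^1$ via $\rho$. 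For $\mathcal{O}_B\oplus L$, $\rho$ takes values in diagonal matrices, acting on $\mathbb{P}^1$ by $t\mapsto\chi(\lambda)t$ for the unitary character $\chi$ attached to $L$. For $\mathcal{E}$ indecomposable of odd degree, $\rho$ lands in the Klein four subgroup of $PGL_2$. For $\mathcal{E}$ indecomposable of even degree (the nontrivial extension of $\mathcal{O}$ by $\mathcal{O}$), take the unipotent representation $t\mapsto t+c(\lambda)$, where $c$ is the additive character with $dc$ nonzero.

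The main obstacle is this last case. One must verify that each listed ruled surface is genuinely flat, with a representation into $PGL_2$ rather than only $PGL_2$-valued transition functions. For the unipotent case, flatness holds because the extension class lies in $H^1(B,\mathcal{O})$, which is represented by a (non-unitary) additive character; equivalently, the Atiyah bundle is the holomorphic flat bundle attached to $\begin{pmatrix}1&c\\0&1\end{pmatrix}$ with $c$ linear in $\lambda$ and lifted to $\mathbb{C}$. For the odd-degree case, one checks via Atiyah's classification \cite{atiyah1957vector} that $\mathcal{E}\otimes(\text{deg }{-1}/2)$ comes from a projective unitary representation with Heisenberg-type image. Alternatively, one can bypass the explicit matrices: the splitting $T_X=L\oplus M$ with $M=T_{X/B}$ gives an integrable horizontal foliation transverse to the fibres, hence a flat structure by Ehresmann's theorem, since the fibres are compact. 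I would use this Ehresmann argument uniformly across all three ruled cases, so that the per-case verification is avoided.
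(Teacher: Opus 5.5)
Your proposal is correct. The abelian, class $\mathcal{C}$ and bielliptic cases are handled exactly as in the paper, but the ruled case takes a different route.

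\textbf{The paper's route.} It applies Beauville's Theorem C to get the universal cover $\mathbb{C}\times\mathbb{P}^1$ with $\pi_1(X)\cong\mathbb{Z}^2$ acting diagonally. It then shows separately that the action on the $\mathbb{C}$ factor is by translations: a nontrivial $g$ fixing $z_0\in\mathbb{C}$ would fix a point of $\{z_0\}\times\mathbb{P}^1$, contradicting freeness, and fixed-point-free automorphisms of $\mathbb{C}$ are translations. Finally it shows that the orbit of $0$ is a lattice.

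\textbf{Your route.} Your Ehresmann suspension uses the given ruling $p\colon X\to B$ instead. The transverse summand is a holomorphic foliation (rank one, hence integrable) transverse to the compact fibres. So horizontal lifts of paths exist, and $X\cong(\widetilde{B}\times\mathbb{P}^1)/\pi_1(B)$ with holomorphic holonomy on $\mathbb{P}^1$. Since $\pi_1(B)$ acts on $\widetilde{B}=\mathbb{C}$ as the deck group, it acts by lattice translations automatically. This avoids Beauville's theorem and the fixed-point and discreteness arguments. The paper's route has the advantage of not needing to identify which summand is vertical, and Beauville's theorem is used elsewhere in the paper anyway.

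\textbf{A step you should write down.} The identification of the vertical summand is asserted without proof, and it does not come for free from ``$T_X$ split''. On a fibre $f\cong\mathbb{P}^1$ one has $L|_f\oplus M|_f\cong\mathcal{O}_{\mathbb{P}^1}(2)\oplus\mathcal{O}_{\mathbb{P}^1}$. Since the degree is constant in $f$, after relabelling $\deg L|_f=0$ for all $f$. Then the composite $T_{X/B}\to T_X\to L$ is zero on every fibre, because there are no nonzero maps $\mathcal{O}_{\mathbb{P}^1}(2)\to\mathcal{O}_{\mathbb{P}^1}$. Hence $T_{X/B}\subseteq M$, and an inclusion of line subbundles of $T_X$ is an equality, so $M=T_{X/B}$.

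\textbf{A caveat on the explicit-matrix aside you discard.} The unipotent case is misdescribed if ``$c$ linear in $\lambda$'' means $\mathbb{C}$-linear. For $c(\lambda)=a\lambda$, the coordinate change $(z,t)\mapsto(z,t-az)$ untwists the action, and the quotient is $B\times\mathbb{P}^1$. You need $c$ with nonzero image in $H^1(B,\mathcal{O}_B)$, e.g.\ $c(\lambda)=\bar\lambda$.
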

\begin{proof}
We consider $3$ cases separately.

\textbf{Case 1:} $X$ abelian surface. Then we can take $
G = 0, F = X$ and $E$ a point.

\textbf{Case 2:} $X$ is bielliptic or of class $\tilde{\mathcal{C}}$. In this case Lemma is immediate from the definition, with $F$ an elliptic curve.

\textbf{Case 3:} $X$ is a $\mathbb{P}^1$-bundle over an elliptic curve $F$, and $T_X$ is split.
Note that 
 the universal cover of $X$ is isomorphic to $\mathbb{C} \times \mathbb{P}^1$. By \cite[Theorem C]{beauville2000complex}, there is a covering map
\[
\mathbb{C} \times \mathbb{P}^1 \xlongrightarrow{\pi} X
\]
such that $\pi$ is a quotient map under a diagonal covering space action of $G := \pi_1(X)\cong \mathbb{Z}^2$ on $\mathbb{C} \times \mathbb{P}^1$.

    If for some $1 \neq g \in \pi_1(X)$, the action of $g$ on $\mathbb{C}$ has a fixed point $z_0$, then $g$ gives an automorphism of $\{z_0\} \times \mathbb{P}^1 \cong \mathbb{P}^1$,
and hence has a fixed point there. This contradicts that $G$ acts freely on $\mathbb{C} \times \mathbb{P}^1$. So, $G$ acts freely on $\mathbb{C}$, hence $G$ acts by translations on $\mathbb{C}$. So, the orbit of $0$ in $\mathbb{C}$ embeds $G$ as a lattice in $\mathbb{C}$.
Thus, we can take $E = \mathbb{P}^1$ and $F = \mathbb{C}$.
\end{proof}
Now we are ready to prove Theorem \ref{main}.

\textit{Proof of Theorem \ref{main}:}
\begin{proof}

\underline{\textit{Proof of (1) and (2):}}
Identify $X = (E \times F)/G$ as in Lemma \ref{uniform}. Let $F'=F/G$, and $F \xrightarrow{q} F'$ the quotient map, and $p:X \to F'$ be the induced map. Note that $F'$ is either an abelian surface or an elliptic curve. For $n\in \mathbb{Z}$, let $F \xrightarrow{nq} F'$ be the composition of $q$ with the multiplication by $n$ map in $F'$. For $(e,f)\in E\times F$, we denote $\overline{(e,f)}$ the image of $(e,f)$ in $X$, also we abbreviate $q(f)$ as $\overline{f}$.

Let $\oplus, \ominus$ denote addition and subtraction on $F$ or $F'$.
We have a well-defined action of $F$ on $X$ given by
\[
f' \cdot \overline{(e,f)} = \overline{(e, f' + f)}.
\]

Let $X^{[n]} \xrightarrow{\pi} F'$
be defined as follows:

Given a length $n$ subscheme $Z$ of $X$, regarded as a point of $X^{[n]}$,
let
\[
p_*[Z] = \sum_i \bar{f_i}
\]
be the pushforward of the corresponding $0$-cycle.
Then define
\[
\pi(Z) = \oplus_{i=1}^n \bar{f_i}.
\]
Let $K=\pi^{-1}(0).$

Let
\begin{center}
\begin{tikzcd}
Y \arrow[r, "g"] \arrow[d,""]
& X^{[n]} \arrow[d, "\pi" ] \\
F  \arrow[r,"nq" ]
& |[, rotate=0]| F'
\end{tikzcd}.
\end{center}

be fiber square.
that is,
\[
Y = \{ (Z,f) \in X^{[n]} \times F \mid \pi(Z) = n \overline{f} \}.
\]

Note that $K\times F\xrightarrow{\phi}Y$ given by $$\phi(W,f)=(f\cdot W,f)$$ is an isomorphism over $F$, with inverse
\[
\phi^{-1}(Z,f) = ((-f)\cdot Z, f).
\]

Let $H = \ker(nq) \subset F$, a finite subgroup of $F$. As $F \xrightarrow{nq} F'$
is the quotient of $F$ under the action of $H$ by translations, so $Y \xrightarrow{g} X^{[n]}$
is the quotient of $Y$ under the action of $H$. Via the isomorphism $\phi$, this action is the following: given $f' \in H$ and $(W,f) \in K \times F$, we have
\[
f' \cdot (W,f) = \bigl((-f')\cdot W,\, f \oplus f'\bigr).
\]
Since this is a diagonal action, Lemma \ref{basic} gives subbundles $\mathcal{E}_1, \mathcal{E}_2$ of $T_{X^{[n]}}$ with $T_{X^{[n]}} = \mathcal{E}_1 \oplus \mathcal{E}_2$. 
By Lemma \ref{basic}, we have 
$$ \mathcal{E}_1\cong \pi^*T_{F'}=
\begin{cases}
\mathcal{O}_{X^{[n]}}^{\oplus 2} , & \quad \textnormal{if $X$ is an abelian surface}, \\
\mathcal{O}_{X^{[n]}}, & \quad \textnormal{if $X$ is of class $\tilde{\mathcal{C}}$},

\end{cases}.$$

and
\[
\mathcal{E}_2 = \ker\!\left( T_{X^{[n]}} \xrightarrow{d\pi} \pi^* T_{F'} \right).
\]

\medskip

Let $\mathcal{E} = \mathcal{E}_2.$ It remains to show $\mathcal{E}$ is indecomposable.

If $X=A$ is an abelian surface, then $K=Kum_n(A)$ is hyperkähler manifold, so by \cite[Section 2.5]{anella2022effectivity}, $T_K$ is stable with respect to any Kähler class, hence indecomposable. As $\mathcal{E}|_K\cong T_K$, we see that $\mathcal{E}$ is indecomposable.

Now assume $X$ is of class $\Tilde{\mathcal{C}}$.
Let $\mathcal{E}_1 = \mathcal{L}$, a line subbundle of $T_{X^{[n]}}$. Suppose $\mathcal{E}$ is not indecomposable.

Write $\mathcal{E} = \mathcal{G}_1 \oplus \mathcal{G}_2$,
where $\mathcal{G}_1, \mathcal{G}_2$ are subbundles of $\mathcal{E}$ of ranks $a$ and $b$ respectively, with $1 \le a \le b$. As $a + b = 2n - 1,$ we have $a < b$.

We have
\[
T_{X^n} = \widetilde{\mathcal{L}} \oplus \widetilde{\mathcal{F}}_1 \oplus \widetilde{\mathcal{F}}_2.
\]

Let $L, M$ be subbundles of $T_X$ with $L \oplus M = T_X$.

By Lemma \ref{crucial}, none of the subbundles
\[
 \widetilde{\mathcal{L}},\widetilde{\mathcal{G}}_1, \widetilde{\mathcal{G}}_2, \widetilde{\mathcal{L}} \oplus \widetilde{\mathcal{G}}_1,\widetilde{\mathcal{L}} \oplus \widetilde{\mathcal{G}}_2,
\widetilde{\mathcal{G}}_1 \oplus \widetilde{\mathcal{G}}_2
\]
can be $\bigoplus_i \pi_i^* L$ or $\bigoplus_i \pi_i^* M$.

So, possibly after swapping $L, M$, $(b)$ of Lemma \ref{Sn invariant} holds for each of the pairs
\[
(\widetilde{\mathcal{L}}, \widetilde{\mathcal{G}}_1 \oplus \widetilde{\mathcal{G}}_2), 
\quad 
(\widetilde{\mathcal{G}}_1, L \oplus \widetilde{\mathcal{G}}_2), 
\quad 
(\widetilde{\mathcal{L}} \oplus \widetilde{\mathcal{G}}_1, \widetilde{\mathcal{G}}_2).
\]

If $h^0(X, M) = 0$, then $\widetilde{\mathcal{L}}, \widetilde{\mathcal{G}}_1$, $\widetilde{\mathcal{L}} \oplus \widetilde{\mathcal{G}}_1$ are three distinct subbundles, but each equals either $\mathcal{F}_1$ or $\mathcal{F}_2$, a contradiction.

If $h^0(X, M) \neq 0$, then $\widetilde{\mathcal{G}}_1 \oplus \widetilde{\mathcal{G}}_2$, $\mathcal{L} \oplus \widetilde{\mathcal{G}}_2$, $\widetilde{\mathcal{G}}_2$ are three distinct subbundles, each equal to either
\[
\mathcal{F}_1 \oplus \left( \bigoplus_j \pi_j^* M \right)
\quad \text{or} \quad
\mathcal{F}_2 \oplus \left( \bigoplus_j \pi_j^* M \right),
\]
again a contradiction.

\underline{\textit{Proof of $(3)$:}} We consider two cases separately.

\textbf{Case 1:} $T_X$ is split. Say $T_X = L \oplus M$, with $L, M$ line subbundles.
Suppose $T_{X^{[n]}} = \mathcal{E}_1 \oplus \mathcal{E}_2,$
with $\mathcal{E}_1, \mathcal{E}_2$ nonzero subbundles. So $T_{X^{[n]}} = \Tilde{\mathcal{E}_1} \oplus \Tilde{\mathcal{E}_2}.$ As $X$ is not an abelian surface, and $L, M$ are nontrivial by Lemma \ref{O+K}, by Lemma \ref{Sn invariant}, we see that, after swapping $L, M$ if necessary, $\widetilde{\mathcal{E}}_2 = \bigoplus_j \pi_j^* M.$
This contradicts Lemma \ref{crucial}.

\textbf{Case 2:} $T_X$ is indecomposable.

Suppose $T_{X^{[n]}} = \mathcal{E}_1 \oplus \mathcal{E}_2$, with $\mathcal{E}_1, \mathcal{E}_2$ nonzero subbundles. Then $T_{X^n} = \widetilde{\mathcal{E}}_1 \oplus \widetilde{\mathcal{E}}_2$,
with $\widetilde{\mathcal{E}}_1, \widetilde{\mathcal{E}}_2$ nonzero $S_n$-invariant subbundles of $T_{X^n}$. This contradicts Theorem \ref{Sn invariant1}.
\end{proof}
\section{Distinguishing products of varieties}
We prove Theorems \ref{conjecture} and \ref{C} in this section.

\textit{Proof of Theorem \ref{conjecture}}:
\begin{proof}
    
Let
\[
Y = \prod_i X^{[a_i]}, \quad \pi_i : Y \to X^{[a_i]}
\]
be the projections.

Let $$e=
\begin{cases}
2 , & \quad \textnormal{if $X$ is an abelian surface}, \\
1, & \quad \textnormal{if $X$ is of class $\tilde{\mathcal{C}}$}, \\
0 , & \quad \textnormal{otherwise}.
\end{cases}$$

For $k \geq 2$, let $t_k$ be the number of summands of rank $2k - e$ in a decomposition of $T_Y$ as a direct sum of indecomposable vector bundles. By \cite[Theorem 3]{atiyah1956krull}, $t_k$ is well-defined, that is, independent of the decomposition.

On the other hand,
\[
T_Y \cong \bigoplus_i \pi_i^* T_{X^{[a_i]}},
\]
and Theorem \ref{main} shows $t_k$ is the number of times $k$ occurs in the multiset $\{a_1,a_2,\cdots, a_r\}$. Here we are using the observation: if $\mathcal{E}$ is an indecomposable vector bundle on $X^{[a_i]}$, then $\pi_i^*(\mathcal{E})$ is indecomposable, as its restriction to a section of $\pi_i$ is $\mathcal{E}$, hence indecomposable. As $Y\cong \prod_j X^{[b_j]}$, we see that $t_k$ is also same as the number of times $k$ occurs in the multiset $\{b_1,b_2,\cdots, b_r\}$. So, for each $k\geq 2$, $k$ occurs same number of times in each multiset. Now $$\sum_ia_i=\frac{1}{2}\dim Y=\sum_j b_j$$ shows this is true for $k=1$ also. So both multisets are the same.
\end{proof}
\begin{remark}\label{general}
    For any smooth projective surface $X$ one can define the invariant $e(X)$ as in the proof of Theorem \ref{conjecture}. Note that by Lemmas \ref{L+L} and \ref{O+K}, $e(X)$ is the largest rank of a trivial direct summand of $T_X$. The proof above in fact shows the following more general statement: Let $X_i, Y_j$ be smooth projective surfaces for $1\leq i\leq r, 1\leq j\leq s$, with $e(X_i), e(Y_j)$'s all equal, and let $a_1, \dots, a_r$, $b_1, \dots, b_s$ be positive integers.
Suppose
\[
\prod_i X_i^{[a_i]} \cong \prod_j Y_j^{[b_j]}.
\]
Then $r=s$ and
$ \{a_1, \dots, a_r\} = \{b_1, \dots, b_s\}
$
as multisets.
\end{remark}

The conjecture in \cite{dey2026classification} was in fact the statement of Theorem \ref{conjecture} without the assumption that $X$ is connected. We show that this also follows by some manipulation with the connected components.
\begin{corollary}\label{disconnected}
    Let $X$ be a smooth projective $\mathbb{C}$-scheme of pure dimension $2$, and $a_1, \dots, a_r$, $b_1, \dots, b_s$ positive integers.
Suppose
\[
\prod_i X^{[a_i]} \cong \prod_j X^{[b_j]}.
\]
Then $r=s$ and
$ \{a_1, \dots, a_r\} = \{b_1, \dots, b_s\}
$
as multisets.
\end{corollary}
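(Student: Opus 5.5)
Write $X=\bigsqcup_{c=1}^m X_c$ with $X_c$ the connected components, each a smooth projective surface. Since a length $a$ subscheme of $X$ splits uniquely by support into subschemes of the components, we have
\[
X^{[a]}\cong \bigsqcup_{k_1+\cdots+k_m=a}\ \prod_{c=1}^m X_c^{[k_c]},
\]
where $X_c^{[0]}$ is a point. The plan is to take connected components of both sides of $\prod_i X^{[a_i]}\cong\prod_j X^{[b_j]}$ and reduce to Remark \ref{general} applied to a well-chosen component.

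The connected components of $\prod_i X^{[a_i]}$ are indexed by matrices $(k_{ic})$ with row sums $a_i$. The component attached to $(k_{ic})$ is $\prod_{i,c} X_c^{[k_{ic}]}$. An isomorphism of schemes maps connected components bijectively onto connected components. So the multiset of isomorphism classes of components must agree on the two sides.

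I would first treat $m=1$, which is Theorem \ref{conjecture}. For $m\geq 2$, I want to isolate a component of a special shape. Choose the component of $X$ that is extremal with respect to an invariant. The simplest choice is to count components: the number of connected components of $\prod_i X^{[a_i]}$ is $\prod_i\binom{a_i+m-1}{m-1}$, which is not enough by itself.

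Instead, I would compare the components that are ``concentrated'', i.e.\ those of the form $\prod_i X_{c(i)}^{[a_i]}$ for a function $c$ assigning a single component to each factor. A cleaner route is to use dimension together with the decomposition of tangent bundles. On a component $\prod_{i,c}X_c^{[k_{ic}]}$, Theorem \ref{main} gives the indecomposable summands of the tangent bundle. Each factor with $k\geq 2$ contributes one summand of rank $2k-e(X_c)$, plus $e(X_c)$ trivial line summands. Each factor with $k=1$ contributes $T_{X_c}$, which splits into summands of rank at most $2$.

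Consider the maximal rank of an indecomposable summand, $\max(2k-e(X_c))$. Among all components of $\prod_i X^{[a_i]}$ this is maximized by putting all of the largest $a_i$ into a component $X_c$ minimizing $e$. Call this minimal value $e_0$.

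The cleanest approach, which I would actually carry out, is to restrict to components of the form $\prod_i X_{c_0}^{[a_i]}$, where $c_0$ is fixed with $e(X_{c_0})=e_0$, and then apply Remark \ref{general}. To do this I would characterize these components intrinsically, or at least their isomorphism classes. For such a component, the rank multiset of non-line indecomposable summands of $T$ is $\{2a_i-e_0: a_i\geq 2\}$. Its sum is maximal, namely $\sum_{a_i\geq2}(2a_i-e_0)$.

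Splitting any $a_i$ across several $X_c$ either produces more pieces or raises $e$. Either way the total rank lying in summands of rank $\geq 3$ can only drop, or the count of trivial summands changes. This needs care, so I would define a statistic $\Phi$ on a component, for instance the lexicographically largest sorted rank vector of indecomposable summands. I would then show that the components achieving the maximal $\Phi$ on the left all have rank vector $\{2a_i-e_0\}$. Doing the same on the right, the maximal vectors on both sides must coincide. This recovers $\{a_i\geq 2\}$ as a multiset whenever $e_0\le 1$, or whenever the maximal rank exceeds what rank-2 pieces can produce. The $a_i=1$ entries are then recovered by the count $\sum a_i=\sum b_j$ and by $r$ via dimension.

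The main obstacle is the bookkeeping in this extremal comparison. A split $k=k'+k''$ across two components could in principle mimic the rank pattern of an unsplit factor when different components have different $e$. The case $e_0=2$ with $a_i=1$ also blurs, since $T$ becomes trivial. I expect to handle this by noting that lexicographic maximality forces the largest entry $a_{\max}$ to sit entirely in a component with $e=e_0$, and then inducting on $\sum a_i$ after removing that factor. The removal is justified by Atiyah's uniqueness \cite[Theorem 3]{atiyah1956krull} together with the fact that components match bijectively.
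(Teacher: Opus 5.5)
Your extremal idea is a genuinely different route from the paper's, and it can be completed. As written, however, the decisive step is only announced (``I would then show that the components achieving the maximal $\Phi$\dots''), and the mechanism you offer for it is not valid. As you describe it, ``inducting on $\sum a_i$ after removing that factor'' means deducing, from $\prod_i X^{[a_i]}\cong\prod_j X^{[b_j]}$ and $a_{\max}=b_{\max}$, an isomorphism between the products with one factor deleted. That is a cancellation statement for varieties, which is exactly what is not available. \cite[Theorem 3]{atiyah1956krull} is about bundles on one fixed variety, and the bijection between connected components does not cancel factors either.

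The induction has to be run on rank data only. Put $e_0=\min_c e(X_c)$, and set $\theta=3$ if $e_0\le 1$ and $\theta=2$ if $e_0=2$. By Theorem~\ref{main}, on a component $\prod_{i,c}X_c^{[k_{ic}]}$ every indecomposable summand of $T$ of rank $\ge\theta$ is the summand of rank $2k_{ic}-e(X_c)\le 2a_i-e_0$ attached to some piece with $k_{ic}\ge 2$. Pieces with $k_{ic}=1$ only give ranks $\le 2$, and if $e_0=2$ every $X_c$ is abelian, so those pieces give only trivial lines. This settles both of your worries. The case $e_0=2$ causes no blurring, and a split cannot mimic an unsplit factor, since splitting strictly lowers ranks and no $X_c$ has $e(X_c)<e_0$.

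So the largest such rank is at most $2a_{\max}-e_0$. Equality holds only if it comes from a factor of maximal size placed unsplit on some $X_c$ with $e(X_c)=e_0$, and that factor then contributes nothing else of rank $\ge\theta$. Delete that factor together with the top entry of the target list $(2a_i-e_0)_{a_i\ge2}$ and repeat. This proves that the lexicographic maximum over all components of the decreasingly sorted list of ranks $\ge\theta$ (a proper initial segment counting as smaller) is exactly the sorted list $(2a_i-e_0)_{a_i\ge 2}$.

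That maximum is an invariant of the product: an isomorphism matches connected components isomorphically, and the rank list is well defined by Krull--Schmidt. Hence $\{a_i\ge 2\}=\{b_j\ge 2\}$, and dimension gives the $1$'s and $r=s$. Thresholding at $\theta$ also repairs your claim that the non-line summands of $\prod_i X_{c_0}^{[a_i]}$ have ranks $\{2a_i-e_0\}$. That claim fails when $e_0=0$, $T_{X_{c_0}}$ is indecomposable and some $a_i=1$.

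For comparison, the paper looks at a single matched pair: $\prod_i X_1^{[a_i]}$ with $e(X_1)$ minimal, and its partner $\prod_{i,j}X_j^{[b_{ij}]}$. Assuming $r\le s$ and counting trivial line summands, it forces every $X_j$ that occurs to have $e(X_j)=e(X_1)$, and also $|B|=r$ in the $\tilde{\mathcal{C}}$ case. It then applies Remark~\ref{general} to that pair. Your completed version avoids the trivial-summand count and the asymmetric normalisation, and treats all values of $e_0$ uniformly. The cost is that you must control the rank data of every component rather than of one pair.
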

\begin{proof}
    Let \( X_1, X_2, \dots, X_k \) be the connected components of \( X \). Assume \( r \le s \) without loss of generality. For \( n \in \mathbb{N} \), note that the connected components of \( X^{[n]} \) are $\prod_{i=1}^k X_i^{[c_i]} \quad \text{for } \sum_{i=1}^k c_i = n.$ Here \( X_i^{[0]} \) is a point by convention.

Since $\prod_{i=1}^r X_1^{[a_i]}$
is a connected component of $\prod_{i=1}^r X^{[a_i]}$,
there is a connected component of $\prod_{i=1}^s X^{[b_i]}$
isomorphic to it. Thus, there are nonnegative integers \( b_{ij} \) for \( 1 \le i \le s \), \( 1 \le j \le k \), with $\sum_{j=1}^k b_{ij} = b_i \quad \text{for all } i$, 
and
\begin{equation}\label{long}
    \prod_{i=1}^r X_1^{[a_i]} \cong \prod_{\substack{1 \le i \le s \\ 1 \le j \le k}} X_j^{[b_{ij}]}.
\end{equation}

Let \( A \) be the multiset \( \{a_1, \dots, a_r\} \), and \( B \) be the multiset
\[
\{ b_{ij} \mid 1 \le i \le s,\, 1 \le j \le k,\, b_{ij} \neq 0 \}.
\]
Clearly, $|B| \ge s \ge r = |A|$,
and if \( |B| = s \), then $B = \{ b_1, \dots, b_s \}$.
Thus, it suffices to show \( A = B \) as multisets. We consider the following cases separately.

\textbf{Case 1:} All \( X_i \)'s are abelian surfaces. In this case we are done by Remark \ref{general}.

\textbf{Case 2:} There is some \( i \) with \( X_i \) neither an abelian surface nor of class $\Tilde{\mathcal{C}}$.

Without loss of generality assume \( i = 1 \). The number of indecomposable components of the tangent bundle of the left-hand side of \eqref{long} which are trivial line bundles is \( 0 \). By \cite[Theorem 3]{atiyah1956krull}, the same has to be true for the right-hand side. This forces $e(X_j) = 0$ 
unless \( b_{ij} = 0 \) for all \( i \). Now we are done by Remark \ref{long}.

\noindent\textbf{Case 3:} Each \( X_i \) is either an abelian surface or of class $\Tilde{\mathcal{C}}$, and not all are abelian surfaces.

Without loss of generality assume \( X_1 \) is of class $\Tilde{\mathcal{C}}$. The number of indecomposable components of the tangent bundle of the left-hand side of \eqref{long} which are trivial line bundles is \( r \). For the right-hand side, this number is \( \ge |B| \), and equality holds if and only if:
\begin{equation}\label{equality}
    e(X_j) = 1, \text{ unless } b_{ij} = 0  \text{ for every } i .
\end{equation}

Since \( |B| \ge r \), \cite[Theorem 3]{atiyah1956krull} shows \( |B| = r \), and \eqref{equality} holds. Now we are done by Remark \ref{general}.

\end{proof}
Now we prove Theorem \ref{C}.

\textit{Proof of Theorem \ref{C}:} 

\begin{proof}
    We induct on $\sum_i a_i=\sum_j b_j$, which is $\dim(X)^{-1}$ times the dimension of the product space. The base case is trivial. Let $Z=\prod_i X^{(a_i)}$. For $m \in \mathbb{N}$, let $k_m$ be the number of $i$ with $a_i = m$, and let $\ell_m$ be the number of $j$ with $b_j = m$. So
\[
Z \cong \prod_m (X^{(m)})^{k_m} \cong \prod_m (X^{(m)})^{\ell_m}.
\]
What we want to show is that $\underline{k} = \underline{\ell}$.

For $m \ge 2$, let $W_m = \operatorname{Sing} (X^{(m)})$. By \cite[Theorem 2]{bansal2025symmetric}, $W_m$ is irreducible with normalization $X \times X^{(m-2)}$.

The intersection of the irreducible components of $\operatorname{Sing} Z$ is isomorphic to $X^{k_1}\times\prod_{m \ge 2} W_m^{k_m}$, and also to $X^{l_1}\times\prod_{m \ge 2} W_m^{l_m}$.

Taking normalizations, we get
\[
X^{\sum_m k_m + k_3} \times \prod_{m \ge 2} (X^{(m)})^{k_{m+2}}
\cong
X^{\sum_m l_m + l_3} \times \prod_{m \ge 2} (X^{(m)})^{l_{m+2}}.
\]

By induction hypothesis, 
\begin{equation}\label{a0}
k_m = \ell_m \text{ for all } m \ge 4,
\end{equation}
 and we have 
\begin{equation}\label{a1}
    k_1 + k_2 + 2k_3 = \ell_1 + \ell_2 + 2\ell_3.
\end{equation}

Also,
\[
\dim Z = \left(\sum_m m k_m\right)\dim X = \left(\sum_m m \ell_m\right)\dim X,
\]
hence
\begin{equation}\label{a2}
    k_1 + 2k_2 + 3k_3 = \ell_1 + 2\ell_2 + 3\ell_3.
\end{equation}

For $n \ge 2$ with $k_n \ne 0$, let
\[
G_n = \prod_{m \ne n} (X^{(m)})^{k_m} \times X \times X^{(n-2)}\times (X^{(n)})^{k_n-1}.
\]

For $n \ge 2$ with $\ell_n \ne 0$, 
\[
H_n = \prod_{m \ne n} (X^{(m)})^{l_m} \times X \times X^{(n-2)}\times (X^{(n)})^{l_n-1}.
\] Here $X^{(0)}$ is a point, by convention. By \cite[Theorem 2]{bansal2025symmetric}, $G_n$'s are the normalizations of irreducible components of the singular locus of $\prod_m (X^{(m)})^{k_m}$, similarly for $H_n$'s.

Suppose $\underline{k} \ne \underline{\ell}$. We want to get a contradiction. 
We first prove the following Claim.

\underline{\textit{Claim:}} Suppose $n \in \mathbb{N}$ is such that $k_{n'} = \ell_{n'}$ for all $n' > n$. Then
\[
(k_n,\ell_n) \in \{(0,1),(1,0),(0,0)\}.
\]

\begin{proof}
    As $\underline{k} \ne \underline{\ell}$, \eqref{a1} and \eqref{a2} force $n \ge 3$. Suppose $(k_n,\ell_n)$ is not one of the above. One of $k_n,\ell_n$ is nonzero; assume without loss of generality $k_n \ne 0$.

As $G_n$ is normalization of an irreducible component of $Z$, there is $n'\in\mathbb{N}$ with $\ell_{n'} \ne 0$ with $G_n \cong H_{n'}.$

If $n' > n$, then equating the exponents of $X^{(n)}$ in $G_n$ and $H_{n'}$, by induction hypothesis we get $k_{n'} = \ell_{n'} - 1,$ a contradiction. If $n' = n$, then equating the exponent of $X$ in $G_n$ and $H_n$ by induction hypothesis, we get $k_1 = \ell_1.$
But then by \eqref{a0},\eqref{a1} and \eqref{a2} we get $\underline{k} = \underline{\ell}$, a contradiction.

So, $n' < n$. Equating the exponent of $X^{(n)}$ on both sides $G_{n}, H_{n'}$ by induction hypothesis, we get
\begin{equation}\label{a4}
    k_{n} - 1 = \ell_{n}.
\end{equation}

If $\ell_{n} \ne 0$, then the same argument, with $k$ and $\ell$ swapped, shows $\ell_{n} - 1 = k_{n}$,
a contradiction to \eqref{a4}. So, $\ell_{n} = 0$ and $k_{n} = 1$.
\end{proof} 
By \eqref{a0}, we can apply the claim to each $n \ge 3$. So we obtain $k_n = \ell_n = 0$ for all $n \ge 4$,
and up to swapping $\underline{k}$ and $\underline{\ell}$, we have $k_3 = 1$, $\ell_3 = 0$.

So, $X^{k_1} \times (X^{(2)})^{k_2}\times W_3$  is an irreducible component of $\operatorname{Sing} Z$, and is nonnormal, as $W_3$ is nonnormal: by \cite[Theorem 2]{bansal2025symmetric}, $W_3$ is singular but the normalization of $W_3$ is $X\times X$ which is smooth.

But every irreducible component of $X^{l_1} \times (X^{(2)})^{l_2}$ is normal, as $W_2$ is normal (in fact $W_2\cong X$). A contradiction.
\end{proof}
\begin{remark}
A similar proof would also show the following more general statement: Let $X_i, Y_j$ be smooth varieties for $1\leq i\leq r, 1\leq j\leq s$, with $\dim X_i, \dim Y_j$'s all equal and $\geq 2$, and let $a_1, \dots, a_r$, $b_1, \dots, b_s$ be positive integers.
Suppose
\[
\prod_i X_i^{(a_i)} \cong \prod_j Y_j^{(b_j)}.
\]
Then $r=s$ and
$ \{a_1, \dots, a_r\} = \{b_1, \dots, b_s\}
$
as multisets.
\end{remark}
\begin{remark}
    Theorem \ref{C} does not hold in general for $\dim X=1$, as for example if $X=\mathbb{A}^1$, then $\prod_i X^{(a_i)}\cong\mathbb{A}^{\sum_ia_i}$ always. However, it holds when $X$ is a smooth projective curve, as proved in \cite{mukherjee2025diagonal}.
\end{remark}

 \section{Acknowledgement}
 I thank Prof. János Kollár for giving valuable ideas and insightful discussions.
\printbibliography

@article{beauville2000complex,
  title={Complex manifolds with split tangent bundle},
  author={Beauville, Arnaud},
  journal={Complex analysis and algebraic geometry},
  pages={61--70},
  year={2000}
}

@article{atiyah1956krull,
  title={On the Krull-Schmidt theorem with application to sheaves},
  author={Atiyah, Michael F},
  journal={Bulletin de la Soci{\'e}t{\'e} math{\'e}matique de France},
  volume={84},
  pages={307--317},
  year={1956}
}

@article{popa2014kodaira,
  title={Kodaira dimension and zeros of holomorphic one-forms},
  author={Popa, Mihnea and Schnell, Christian},
  journal={Annals of Mathematics},
  pages={1109--1120},
  year={2014},
  publisher={JSTOR}
}

@inproceedings{fong2024connected,
  title={Connected algebraic groups acting on algebraic surfaces},
  author={Fong, Pascal},
  booktitle={Annales de l'Institut Fourier},
  volume={74},
  number={2},
  pages={545--587},
  year={2024}
}

@article{dey2026classification,
  title={On the classification of products of Hilbert schemes of points over a surface},
  author={Dey, Arijit and Mukherjee, Arijit and Pahari, Anubhab},
  journal={arXiv preprint arXiv:2604.01374},
  year={2026}
}

@article{anella2022effectivity,
  title={Effectivity of semi-positive line bundles},
  author={Anella, F and Huybrechts, D},
  journal={Milan Journal of Mathematics},
  volume={90},
  number={2},
  pages={389--401},
  year={2022},
  publisher={Springer}
}

@article{fogarty1968algebraic,
  title={Algebraic families on an algebraic surface},
  author={Fogarty, John},
  journal={American Journal of Mathematics},
  volume={90},
  number={2},
  pages={511--521},
  year={1968},
  publisher={JSTOR}
}

@article{belmans2020automorphisms,
  title={Automorphisms of Hilbert schemes of points on surfaces},
  author={Belmans, Pieter and Oberdieck, Georg and Rennemo, J{\o}rgen},
  journal={Transactions of the American Mathematical Society},
  volume={373},
  number={9},
  pages={6139--6156},
  year={2020}
}

@article{Ha,
  title={Automorphisms of the Hilbert schemes of $n$ points of a rational surface and the anticanonical Iitaka dimension},
  author={Hayashi, Taro},
  journal={Geometriae Dedicata},
  volume={207},
  pages={395--407},
  year={2020},
  publisher={Springer}
}

@article{Og,
  title={On automorphisms of the punctual Hilbert schemes of K3 surfaces},
  author={Oguiso, Keiji},
  journal={European Journal of Mathematics},
  volume={2},
  number={1},
  pages={246--261},
  year={2016},
  publisher={Springer}
}

@article{Wa,
  title={On automorphisms of Hilbert squares of smooth hypersurfaces},
  author={Wang, Long},
  journal={Communications in Algebra},
  volume={51},
  number={2},
  pages={586--595},
  year={2023},
  publisher={Taylor \& Francis}
}

@article{bansal2025automorphisms,
  title={Automorphisms of punctual Hilbert schemes and symmetric powers of varieties},
  author={Bansal, Ashima and Sarkar, Supravat and Vats, Shivam},
  journal={arXiv preprint arXiv:2508.18059},
  year={2025}
}

@article {wandel2013stability,
    AUTHOR = {Wandel, Malte},
     TITLE = {Stability of tautological bundles on the {H}ilbert scheme of
              two points on a surface},
   JOURNAL = {Nagoya Math. J.},
  FJOURNAL = {Nagoya Mathematical Journal},
    VOLUME = {214},
      YEAR = {2014},
     PAGES = {79--94},
      ISSN = {0027-7630,2152-6842},
   MRCLASS = {14F05 (14C05 14D20 14J28)},
  MRNUMBER = {3211819},
MRREVIEWER = {Maria\ Luisa\ Spreafico},
    
}

@article{scala2015higher,
  title={Higher symmetric powers of tautological bundles on Hilbert schemes of points on a surface},
  author={Scala, Luca},
  journal={arXiv preprint arXiv:1502.07595},
  year={2015}
}

@article{oprea2022big,
  title={Big and nef tautological vector bundles over the Hilbert scheme of points},
  author={Oprea, Dragos and others},
  journal={SIGMA. Symmetry, Integrability and Geometry: Methods and Applications},
  volume={18},
  pages={061},
  year={2022},
  publisher={SIGMA. Symmetry, Integrability and Geometry: Methods and Applications}
}

@article{stapleton2016geometry,
  title={Geometry and stability of tautological bundles on Hilbert schemes of points},
  author={Stapleton, David},
  journal={Algebra \& Number Theory},
  volume={10},
  number={6},
  pages={1173--1190},
  year={2016},
  publisher={Mathematical Sciences Publishers}
}

@article{mori1979projective,
  title={Projective manifolds with ample tangent bundles},
  author={Mori, Shigefumi},
  journal={Annals of Mathematics},
  volume={110},
  number={3},
  pages={593--606},
  year={1979},
  publisher={JSTOR}
}

@article{mehta1987varieties,
  title={Varieties in positive characteristic with trivial tangent bundle},
  author={Mehta, Vikram B and Srinivas, Vasudevan},
  journal={Compositio Mathematica},
  volume={64},
  number={2},
  pages={191--212},
  year={1987}
}

@article{liu2023moment,
  title={On moment map and bigness of tangent bundles of G-varieties},
  author={Liu, Jie},
  journal={Algebra \& Number Theory},
  volume={17},
  number={8},
  pages={1501--1532},
  year={2023},
  publisher={Mathematical Sciences Publishers}
}

@article{tian1992stability,
  title={On stability of the tangent bundles of Fano varieties},
  author={Tian, Gang},
  journal={Internat. J. Math},
  volume={3},
  number={3},
  pages={401--413},
  year={1992}
}

@article{conde2004manifolds,
  title={On manifolds whose tangent bundle is big and 1-ample},
  author={Conde, Luis Eduardo Sol{\'a} and Wi{\'s}niewski, Jaros{\l}aw A},
  journal={Proceedings of the London Mathematical Society},
  volume={89},
  number={2},
  pages={273--290},
  year={2004},
  publisher={Cambridge University Press}
}

@article{bansal2025symmetric,
  title={Symmetric power of higher dimensional varieties},
  author={Bansal, Ashima and Sarkar, Supravat and Vats, Shivam},
  journal={arXiv preprint arXiv:2508.12654},
  year={2025}
}

@article{gupta2015survey,
  title={A survey on Zariski cancellation problem},
  author={Gupta, Neena},
  journal={Indian Journal of Pure and Applied Mathematics},
  volume={46},
  number={6},
  pages={865--877},
  year={2015},
  publisher={Springer}
}

@article {BHPS,
    AUTHOR = {Bhatt, Bhargav and Ho, Wei and Patakfalvi, Zsolt and Schnell,
              Christian},
     TITLE = {Moduli of products of stable varieties},
   JOURNAL = {Compos. Math.},
  FJOURNAL = {Compositio Mathematica},
    VOLUME = {149},
      YEAR = {2013},
    NUMBER = {12},
     PAGES = {2036--2070},
      
}

@article{ganong1985tangent,
  title={The tangent bundle of a ruled surface},
  author={Ganong, Richard and Russell, Peter},
  journal={Mathematische Annalen},
  volume={271},
  number={4},
  pages={527--548},
  year={1985},
  publisher={Springer}
}

@article{mukherjee2025diagonal,
  title={Diagonal property and weak point property of higher rank divisors and certain Hilbert schemes},
  author={Mukherjee, Arijit and Nagaraj, DS},
  journal={Bulletin des Sciences Math{\'e}matiques},
  volume={198},
  pages={103541},
  year={2025},
  publisher={Elsevier}
}

@article{atiyah1957vector,
  title={Vector bundles over an elliptic curve},
  author={Atiyah, Michael Francis},
  journal={Proceedings of the London Mathematical Society},
  volume={3},
  number={1},
  pages={414--452},
  year={1957},
  publisher={Oxford University Press}
}

@article{poonen2002grothendieck,
  title={The Grothendieck ring of varieties is not a domain},
  author={Poonen, Bjorn},
  journal={Mathematical Research Letters},
  volume={9},
  number={4},
  pages={493--497},
  year={2002},
  publisher={International Press of Boston, Inc. Somerville, MA 02143, USA}
}

@article{atiyah1957complex,
  title={Complex analytic connections in fibre bundles},
  author={Atiyah, Michael Francis},
  journal={Transactions of the American Mathematical Society},
  volume={85},
  number={1},
  pages={181--207},
  year={1957},
  publisher={JSTOR}
}

@article{catanese2022manifolds,
  title={Manifolds with trivial Chern classes I: hyperelliptic manifolds and a question by Severi},
  author={Catanese, Fabrizio},
  journal={arXiv preprint arXiv:2206.02646},
  year={2022}
}
\vspace{40pt}
\begin{flushleft}
{\scshape Department of Mathematics, Fine Hall, Princeton University, Princeton, NJ 700108, USA}.

{\fontfamily{cmtt}\selectfont
\textit{Email address: ss6663@princeton.edu} }
\end{flushleft}

\end{document}